\newtheorem{theorem}{Theorem}[section]
\newtheorem{lemma}[theorem]{Lemma}
\newtheorem{prop}[theorem]{Proposition}
\newtheorem{corollary}[theorem]{Corollary}
\theoremstyle{definition}
\newtheorem{example}[theorem]{Example}
\newtheorem{remark}[theorem]{Remark}
\numberwithin{equation}{section}
\def\Aut{\mathrm{Aut}}
\def\Der{\mathrm{Der}}
\def\Hom{\mathrm{Hom}}
\def\Ker{\mathrm{Ker}}
\def\Lie{\mathrm{Lie}}
\def\Mor{\mathrm{Mor}}
\def\Spec{\mathrm{Spec}}
\def\Ext{\mathrm{Ext}}
\def\gr{\mathrm{gr}}
\def\bbz{\bar{\bar{0}}}
\newcommand{\rmnum}[1]{\romannumeral #1}
\newcommand{\Rmnum}[1]{\expandafter\@slowromancap\romannumeral #1@}
\begin{document}
\title[On deformation quantizations of symplectic supervarieties]
{On deformation quantizations of symplectic supervarieties}
\author{Husileng Xiao}
\address{ School of mathematical Science, Harbin Engineering University, Harbin, 150001, China }\email{hslxiao@hrbeu.edu.cn}
\maketitle

\begin{abstract}
We classify deformation quantizations
of the symplectic supervarieties that are smooth and admissible. 
This generalizes the corresponding result of  Bezrukavnikov and Kaledin to the super case.
We relate the equivalence classes of quantizations of  supervarieties with that of their even reduced symplectic varieties. Finally, we prove that certain  nilpotent orbits of basic Lie superalgebras are admissible and split, and classify their deformation quantizations.
\end{abstract}

\section{Introduction}

The deformation quantization of Poisson manifold is an extensively studied topic in mathematics and physics. A fundamental problem is the existence and classification of the quantizations of a given Poisson manifold.  It was  solved systematically by Kontsevich based on his famous formality theorem \cite{Ko1}. Beyond the setting of real manifolds, deformation quantization is also considered in other geometric settings, such as holomorphic or algebraic cases \cite{Ko3, NT, Ye1}.   Let $X$ be an admissible smooth algebraic variety and $\{,\}_{\omega}$ be the Poisson bracket arising from a symplectic form $\omega$ on $X$. 
  Denote by  $Q(X,\omega)$ the set of  isomorphism classes of deformation quantizations of $(X,\{,\}_{\omega})$. Applying techniques from  formal geometry,  Bezrukavnikov and Kaledin constructed a period map from $Q(X,\omega)$ to $H_{\mathrm{dR}}^{2}(X)[[\hbar]]$, which is injective \cite{BK}.

Deformation quantization of supergeometry is an important topic, not only in physics but also with applications in non-super geometry. For instance, Voronov derived  the Atiyah-Singer index theorem on real manifolds using deformation quantizations of a suitable  symplectic supermanifold \cite{Vo}. For the most recent progress on this topic, see \cite{Am1,Am2}. In this paper, we study the deformation quantization  of   supervarieties 
over an algebraically closed field $\mathbf{k}$ of characteristic $0$. Inspired by Losev's works on semisimple Lie algebras, our main motivation is to apply these results in the representation theory of Lie superalgebras.

From now on, let $(X,\omega)$ be an admissible smooth supervariety equipped with an even symplectic form $\omega$.
Our main result is to construct an injective period map from $Q(X, \omega)$ to 
a subset of $H_{\mathrm{dR}}^{2}(X)[[\hbar]]$ (see Theorem \ref{thm_peroid}).   We use an approach similar  to that in \cite{BK}. Most of the arguments there can be generalized to the super case. However, some preparatory work on super geometric concepts is required. We present some fundamental properties  of $\mathcal{D}$-modules, de Rham cohomology, jet bundles and 
Harish-Chandra torsors in the setting of super algebraic geometry.
 As in the case of real supermanifolds, Polishchuk showed that the de Rham cohomology of $X$ is isomorphic to that of its even reduced variety $X_{\bbz}$ \cite{Po}. This phenomenon will be reflected in the quantizations. Assume that $X$  and $X_{\bbz}$ are admissible. We construct an injective map  $ \iota_Q:  Q(X, \omega)  \rightarrow Q(X_{\bbz}, \omega_{\bbz}) $ applying the period map. 

Finally, we provide an important class of supervarieties that satisfies the conditions of our main result. 
Let $\mathfrak{g}=\mathfrak{g}_{\bar{0}}+\mathfrak{g}_{\bar{1}}$ be a basic Lie superalgebra over $\mathbf{k}$, $\mathbb{O}$ be the coadjoint orbit of a nilpotent element $\chi \in  \mathfrak{g}_{\bar{0}}^*= \mathfrak{g}_{\bar{0}}$ under the corresponding supergroup $G$  and $\omega_{\chi}$  be the  Kostant-Kirillov symplectic form  on $\mathbb{O}$.
In the last section, we prove that a class of  super nilpotent orbits $\mathbb{O}$  are admissible and split.  This leads to a classification of   deformation quantizations of  $(\mathbb{O},\omega_{\chi})$.
In the pure even case $\mathfrak{g}=\mathfrak{g}_{\bar{0}}$, Losev  classified the filtered quantizations of  $\mathbb{O}$  based on the classification of deformation quantizations. Then he found some deep connections  among the finite-dimensional representations of the W-algebra $\mathrm{U}(\mathfrak{g},e)$, filtered quantizations of $\mathbb{O}$ and primitive ideals of the universal enveloping algebra $\mathrm{U}(\mathfrak{g})$. Furthermore, he formulated a version of  orbit method for  semisimple Lie algebras \cite{Lo2}.  The Harish-Chandra modules over quantizations of  $(\mathbb{O},\omega_{\chi})$ are recently considered in \cite{LY}. The main motivation of this paper is to develop a super version of these results. 

\subsection*{Conventions}
A supervariety $X$ will also be denoted by $(X_{\bbz}, \mathcal{O}_X)$ if we want to emphasize its 
reduced topological part $X_{\bbz}$ and the structure sheaf $\mathcal{O}_X$. For a sheaf $\mathcal{F}$ over $X$, the  sections over an open set $U \subset X_{\bbz} $ is denoted by 
$\mathcal{F}(U)$ or $\Gamma(U,\mathcal{F})$. For a vector space $V=V_{\bar{0}}+V_{\bar{1}}$, $V_{\hbar}$ denotes the  vector superspace $V[[\hbar]]$. For $ v \in V$, $|v| \in\{\bar{0},\bar{1}\}$ takes the degree of $v$,  is meaningful if and only if
$v$ is homogeneous. By a vector bundle on $X$, we mean a locally free sheaf of $\mathcal{O}_X$-modules with finite or infinite rank. The super symmetric algebra $\mathbf{S}[V_{\bar{0}}]\otimes \bigwedge V_{\bar{1}}$ of $V$ is denoted by $\mathbf{S}(V)$. For a pair of superschemes $X$ and $Y$, $\mathrm{Mor}(X,Y)$ denotes the set of morphims from $X$ to $Y$.

\section*{Acknowledgment}
The author thanks the anonymous referee for suggesting Reference \cite{Po} and for their numerous other helpful comments, 
which have significantly improved the quality of this paper. The referee pointed out several gaps in the proofs of Proposition \ref{prop_admi} and Theorem \ref{thm_5.4} in the previous version of the manuscript. This work is supported by the Natural Science Foundations of Heilongjiang Province, China(Grant No: YQ2023A007).

\section {Preliminaries on super algebraic geometry} 
This section is devoted to  recall some basic notions of super algebraic geometry.
\subsection{Superschemes as ringed spaces and scheme functor} 
Let $A=A_{\bar{0}}+A_{\bar{1}}$ be a super commutative algebra and $I_{A_{\bar{1}}}$ be the  ideal generated by $A_{\bar{1}}$. Denote by $A_{\bbz}$  the quotient $A/I_{A_{\bar{1}}}$ and by $\Spec A_{\bbz}$  the ordinary prime spectrum of $A_{\bbz}$. The affine superscheme  $\Spec{A}$ associated to $A$
is defined as the ringed space $(\Spec{A_{\bbz}}, \mathcal{O}_{\Spec{A}})$, where the structure sheaf $\mathcal{O}_{\Spec{A}}$ is given by $\Gamma(D(f),\mathcal{O}_{\Spec{A}})=A_f$ for any non-zero divisor $f \in A_{\bbz}$. By a \textit{superscheme}, we refer to locally ringed space  $X=(X_{\bbz},\mathcal{O}_{X})$, which is locally an affine superscheme. 
We say that $X_{\bbz}$ is the \textit{even reduced} scheme of $X$. Denote by $\iota: X_{\bbz} \hookrightarrow X$  the canonical embedding given by the quotient $ \iota^{\sharp}: \mathcal{O}_X \twoheadrightarrow \mathcal{O}_{X_{\bbz}}$ of sheaves over  $X_{\bbz}$.

Let $\mathrm{Al}_{\mathbf{k}}$ be the category of  super commutative algebras over $\mathbf{k}$.
By \textit{$\mathbf{k}$-functor} we mean a functor from $\mathrm{Al}_{\mathbf{k}}$ to the category $\mathrm{Set}$ of sets. 
For  $ A \in \mathrm{Al}_{\mathbf{k}}$, the \textit{affine superscheme functor} associated to $A$ refers to  the $\mathbf{k}$-functor 
$$\Spec A: \mathrm{Al}_{\mathbf{k}} \rightarrow \mathrm{Set};T \mapsto  \Hom_{\mathrm{Al}_{\mathbf{k}}}(A,T).$$
For an ideal $I \subset A$,  the closed subscheme $V(I)$ is defined as the functor $\Spec(A/I)$. The open subscheme $D(I)$, by definition, is  the functor 
$$ D(I): \mathrm{Al}_{\mathbf{k}} \rightarrow \mathrm{Set}, T \mapsto \{ \phi \in \Spec A(T) \mid \phi(I)T=T\}.$$

We say a $\mathbf{k}$-functor is covered by a collection of open subfunctors $\{X_i\}_{i \in I}$ if $X(\mathbf{f})=\cup_{i \in I} X_i(\mathbf{f})$ for any field extension $\mathbf{f} \supset \mathbf{k}$. A $\mathbf{k}$-functor $X$ is said to be a \textit{superscheme functor} if it is covered by a collection of open affine sub-superschemes.  

A functorial superscheme  determines a geometric superscheme in the apparent way suggested by the notation. 
Conversely,  a geometric superscheme $X$ defines a functorial superscheme 
by $X(T)=\Mor(\Spec T,X)$ for all $ T \in \mathrm{Al}_{\mathbf{k}}$. This is an equivalence 
between the categories of geometric and functorial superschemes. 

By a \textit{supervariety} $X$,  we mean a superscheme $X=(X_{\bbz},\mathcal{O}_X)$ whose even reduced scheme $(X_{\bbz},\mathcal{O}_{X_{\bbz}})$ is  reduced  and of finite type over $\mathbf{k}$. A \textit{group superscheme} is a superscheme $G$ such that $G(T)$ is a group for any $T \in \mathrm{Al}_{\mathbf{k}}$.
\begin{example}
	Let $\mathcal{A}$ be a finite-dimensional associative superalgebra over $\mathbf{k}$. We have the group superscheme $\Aut(\mathcal{A})$ of automorphisms of $\mathcal{A}$, which is, by definition, $\Aut(\mathcal{A})(T)=\Aut_{T}(\mathcal{A}\otimes_\mathbf{k} T)$ for $T \in\mathrm{Al}_{\mathbf{k}}$. 
\end{example}

\subsection{Smoothness and splitness of supervarieties}
Let $X=(X_{\bbz},\mathcal{O}_X)$ be a supervariety. The tangent sheaf $\mathrm{T}_X$ of $X$ is defined as the sheaf of derivations of $\mathcal{O}_X$. It is the sheaf associated to the presheaf given by $\Gamma(U,\mathrm{T}_X)=\mathrm{Der}_{\mathbf{k}}(\mathcal{O}_X(U))$
for any open subset $U$ of $X$. Let $\mathrm{T}_{X,x}$ denote the stalk of $\mathrm{T}_X$ at $x \in X_{\bbz}$ and   $\mathrm{Der}_x$ stand for the space
$$ \{ v: \mathcal{O}_{X,x} \rightarrow \mathbf{k} \mid v(fg)=v(f)g(x)+(-1)^{|v||f|}f(x)v(g) \text{ for all $f,g \in \mathcal{O}_{X,x}$}\}$$
of point derivations at $x$.  We say that $X$ is \textit{smooth} if the natural map $\mathrm{T}_{X,x} \rightarrow \mathrm{Der}_x$ is surjective.
Let $\hat{\mathcal{O}}_{X,x}$ be the completion  of $ \mathcal{O}_X$ at $x \in X_{\bbz}$ and $\mathbf{S}[[\mathrm{Der}_x^{*}]]$ be the algebra of formal power series of $\mathrm{Der}_x^{*}$. A superscheme is  smooth if and only if $\hat{\mathcal{O}}_{X,x}=\mathbf{S}[[\mathrm{Der}_x^{*}]]$. 
For the proof and other equivalent descriptions of smoothness, see \cite{Sh}. 

Given a super commutative algebra $A=A_{\bar{0}}+A_{\bar{1}}$, let $$\gr A=\bigoplus_{n\geq 0}I_{A_{\bar{1}}}^{n}/I_{A_{\bar{1}}}^{n+1}.$$ 
For a superscheme $X=(X_{\bbz},\mathcal{O}_X)$, $\gr X$ stands for the  superscheme obtained by gluing $\Spec(\gr \mathcal{O}_{X}(U))$, where $U$ runs over an affine open covering of $X_{\bbz}$.  A superscheme $X$ is said to be \textit{split} if $X$ is isomorphic to $\gr X$. In the case when $X$ is smooth, there exists a vector bundle $E$  over  $X_{\bbz}$  such that $\gr X$ is isomorphic to 
$(X_{\bbz}, \bigwedge^{\bullet} E)$. 
 
\section{Super $\mathcal{D}_X$-modules and de Rham cohomology}

 In this section, we investigate $\mathcal{D}_X$-mdoules, de Rham cohomology and jet bundles within the framework of super algebraic geometry and investigate some of their fundamental properties.

\subsection{Super $\mathcal{D}_X$-modules}
Let $X$ be a smooth superscheme of finite type over $\mathbf{k}$.
The ring $\mathcal{D}_{X}$ of differential operators  over $X$ is defined as the subalgebra of  $\mathcal{E}nd_{\mathbf{k}}(\mathcal{O}_X)$ generated by  $\mathrm{T}_X$ and $\mathcal{O}_X$. For any $p \in X_{\bbz}$, 
there exists an affine neighborhood $U \subset X_{\bbz}$ of $p$ and a set of local coordinates $ \{ x_1,\ldots, x_{n+m} \in \mathcal{O}_{X}(U) \}$ such that  $\{ \partial_{x_i} | 1 \leq i \leq n+m \}$ is a basis of $\mathrm{T}_X(U)$ as an $\mathcal{O}_X(U)$-module. Here $\partial_{x_i}$ is given by $\partial_{x_i}( x_j)=\delta_{i,j}$. 
Assume  that $x_1,\dots, x_n$ is even and $x_{n+1},\dots, x_{n+m}$ is odd. The odd coordinates 
$x_{i+n}$ are sometimes denoted by $s_{i}$  for convenience.  
In the above setting, we have 
$$\mathcal{D}_X(U)=\bigoplus_{\alpha \in \mathbb{N}^n; \beta \in \{0,1\}^{m}}\mathcal{O}_X(U)\partial_x^{\alpha}\partial_s^{\beta},$$ 
where  $\mathbb{N}^n$ denotes the set of  $n$-tuples of non-negative integers;
$\{0,1\}^{m}$ denotes the set of  $m$-tuples of $\{0,1 \}$; for $\alpha=(\alpha_1,\ldots \alpha_n) \in \mathbb{N}^n$, $\partial_x^{\alpha}=\partial_{x_1}^{\alpha_1} \cdots \partial_{x_n}^{\alpha_n}$ and   $\partial_s^{\beta}$ is defined similarly. Let $F^{\bullet}(\mathcal{D}_{X})$ be the order filtration of $\mathcal{D}_{X}$ given by
\begin{equation}
F^{p}(\mathcal{D}_{X})(U)=\{ f\partial_x^{\alpha}\partial_s^{\beta}| f \in \mathcal{O}_X(U), \sum_{i=1}^n\alpha_{i}+ \sum_{j=1}^m\beta_j \leq p\}.
\end{equation} 
The sheaf  $\gr\mathcal{D}_{X}$ of the associated graded algebra is isomorphic to the structure sheaf $\mathcal{O}_{\mathrm{T}^{*}_X}$ of the cotangent bundle $\mathrm{T}^{*}_X$.

A sheaf $\mathcal{E}$ of $\mathcal{O}_X$-module is said to be a left $\mathcal{D}_{X}$-module if it admits a left $\mathcal{D}_{X}$-action compatible with the $\mathcal{O}_X$-action. Namely, $\mathcal{O}_X$-module  $\mathcal{E}$ becomes a left $\mathcal{D}_X$-module if and only if
there is an action
$$\nabla:  T_X \otimes \mathcal{E} \rightarrow \mathcal{E}, (\theta,x) \mapsto \nabla_{\theta}\cdot x$$ 
such that 
\begin{itemize}
\item[(1)] $f \nabla_{\theta}\cdot x=f(\nabla_{\theta}\cdot x)$,
\item[(2)] $\nabla_{\theta}\cdot(fx)=\theta(f)x+(-1)^{|f||\theta|}f\nabla_{\theta}\cdot x$,
\item[(3)] $\nabla_{[\theta_1,\theta_2]}\cdot x=\nabla_{\theta_1} \cdot (\nabla_{\theta_2}\cdot x)-(-1)^{|\theta_1||\theta_2|}\nabla_{\theta_2} \cdot (\nabla_{\theta_1}\cdot x)$
\end{itemize}
hold for all local sections $f \in \Gamma(U,\mathcal{O}_X)$, $x \in \Gamma(U,\mathcal{E})$ and $\theta,\theta_1,\theta_2 \in \Gamma(U,\mathrm{T}_X)$.

Let $\Omega_{X}^{1}:=\mathcal{H}om_{\mathcal{O}_X}(\mathrm{T}_X, \mathcal{O}_X)$,  $\Pi$ be the parity reversing functor, and  $\Omega_{X}^{\bullet}$ be the sheaf 
of commutative graded algebras $\mathbf{S}(\Pi \Omega_{X}^{1})$. There is a canonical differential operator $\mathrm{d}$ on $\Omega_{X}^{\bullet}$  
given by 
$$ \mathrm{d}(f)=\sum_{i=1}^{n} \frac{\partial f}{\partial x_i} \mathrm{d}x_i, \mathrm{d}^{2}(f)=0,   \mathrm{d}(\alpha\beta)=\mathrm{d}(\alpha)\beta+(-1)^{|\alpha|}\alpha\mathrm{d}\beta$$ 
on the local sections.

We have $\mathrm{d}^{2}=0$ and call $(\Omega_{X}^{\bullet}, \mathrm{d})$ the \textit{de Rham complex} of $X$. For an $A \in \mathrm{Al}_{\mathbf{k}}$,  $\Omega_{A}^{\bullet}$ denotes  $\Gamma(\mathrm{Spec} A_{\bbz},\Omega_{\mathrm{Spec} A}^{\bullet})$.  Unlike that of schemes of finite type over $\mathbf{k}$, the de Rham complex of a superscheme of finite type  may have infinitely many non-zero terms. 

For a coherent $\mathcal{O}_X$-module $\mathcal{E}$, a \textit{connection} on $\mathcal{E}$ means a $\mathbf{k}$-linear map $\mathrm{d}_{\mathcal{E}}: \mathcal{E} \rightarrow  \Omega_{X}^{1}\otimes_{\mathcal{O}_X} \mathcal{E}$  such that  
$$ \mathrm{d}_{\mathcal{E}}(fx)=\mathrm{d}(f) \otimes x + (-1)^{|f|} f \otimes\mathrm{d}_{\mathcal{E}}x   $$ 
for all $f \in \Gamma(U,\mathcal{O}_X)$, $x \in \Gamma(U,\mathcal{E})$. There is a natural extension $\mathrm{d}_{\mathcal{E}}:  \Omega^{k}_{X}(\mathcal{E}) \rightarrow \Omega^{k+1}_{X}(\mathcal{E})$ given by 
$$ \mathrm{d}_{\mathcal{E}}(\alpha \otimes x)=\mathrm{d}\alpha\otimes x +(-1)^{|\alpha|}\alpha \otimes \mathrm{d}_{\mathcal{E}}x$$ 
for all $\alpha \in \Gamma(U,\Omega^{k}_{X})$ and $x \in \Gamma(U,\mathcal{E})$.
We say that $\mathrm{d}_{\mathcal{E}}$ is a \textit{flat connection} if $\mathrm{d}_{\mathcal{E}}^{2}=0$ and   $(\Omega^{\bullet}_X(\mathcal{E}), \mathrm{d}_{\mathcal{E}})$ is the \textit{de Rham complex} of $\mathcal{E}$.

For a coherent $\mathcal{O}_X$-module $\mathcal{E}$,  a left $\mathcal{D}_{X}$-modules structure   on $\mathcal{E}$ is equivalent to a left flat connection on $\mathcal{E}$ by the canonical isomorphism  
$$\Hom_{\mathcal{O}_X}(\mathcal{E}, \Omega^{1}_X \otimes_{\mathcal{O}_X} \mathcal{E})= \Hom_{\mathcal{O}_X}( \mathrm{T}_X \otimes_{\mathcal{O}_X} \mathcal{E}, \mathcal{E}).$$

For a Lie superalgebra $\mathfrak{g}$, let $\mathfrak{g}_{X}$ be the trivial vector bundle with fiber $\mathfrak{g}$. We regard $\mathfrak{g}_{X}$ as a left $\mathcal{D}_X$-module in the obvious  way. The derivation   
$\mathrm{d}_{\mathfrak{g}_X}$ of the complex $\Omega_{X}^{\bullet}( \mathfrak{g}_X)$ is also denoted by $\mathrm{d}$ for simplicity. By a $\mathfrak{g}$-valued $k$ form, we refer to a section of $\Omega_{X}^{k}(\mathfrak{g}_X) $.  For $\alpha u \in \Omega_{X}^{k}(\mathfrak{g}_X)$ and  $\beta v \in \Omega_{X}^{l}(\mathfrak{g}_X)$, let $\alpha u \wedge \beta v:= \alpha \wedge \beta[u,v] \in  \Omega_{X}^{k+l}(\mathfrak{g}_X)$.

\subsection{Hypercohomology and de Rham cohomology}
Let $\mathscr{A},\mathscr{B}$ be a pair of abelian categories and  $F: \mathscr{A} \rightarrow \mathscr{B} $ be a right exact covariant functor.
 Assume that $\mathscr{A}$ has 
enough injective objects.  Denote by $RF$ the right derived functor of $F$ and by $ \mathbf{R}F: D^{+}(\mathscr{A}) \rightarrow D^{+}(\mathscr{B})$ the corresponding functor of derived categories. For a bounded below complex $C^{\bullet}$  of $\mathscr{A}$, the hypercohomology $\mathbb{H}^{i}_F(C^{\bullet})$ is defined as the $i$-th cohomology of the complex $\mathbf{R}F(C^{\bullet})$. Hypercohomology can be computed by the spectral sequence
\begin{equation}\label{spec_sequ}
E_2^{pq}= RF^{p}(H^q(C^{\bullet}))\Rightarrow \mathbb{H}^{p+q}_F(C^{\bullet}).
\end{equation} 

We list some elementary properties of hypercohomology as follows.
For a single object $ C \in \mathscr{A}$ which is  seen  as a complex concentrated at the $0$ degree,   $\mathbb{H}^{i}_F(C)$ coincides with  derived functor $RF^{i}(C)$.
Quasi-isomorphisms of complexes preserve the hypercohomology. A short exact sequence of complexes gives a long exact sequence of  hypercohomology.

Now let $\mathscr{A}$  and $\mathscr{B}$ be the category of sheaves over $X$  and $F=\Gamma(X,\text{-})$
be the functor that takes global section. In this case, we simplify $\mathbb{H}^{i}_F(\text{-})$ by $\mathbb{H}^{i}(\text{-})$.
The \textit{de Rham cohomology} $H_{\mathrm{dR}}^{i}(X)$ of $X$, by definition,  is the hypercohomology $\mathbb{H}^{i}(\Omega_{X}^{\bullet})$. The de Rham cohomology $H_{\mathrm{dR}}^{i}(X,\mathcal{E})$ of a $\mathcal{D}_X$-module $\mathcal{E}$ is defined to be $\mathbb{H}^{i}(\Omega_{X}^{\bullet}(\mathcal{E}))$.
We have that $H_{\mathrm{dR}}^{i}(X)=0$ for all $ i > 2\dim(X_{\bbz}) $. However,
  unlike  the sheaf cohomology, $H_{\mathrm{dR}}^{i}(X)$ may not vanish  for $i$ with $\dim(X_{\bbz})< i \leq 2\dim(X_{\bbz})$.  Another significant difference is that the  de Rham cohomology of an affine (super)scheme  may not vanish.

By the same argument as in the even case \cite{St}, we have 
\begin{prop}[K$\ddot{\text{u}}$nneth formula]\label{KUN-FOR} 
Let $X,Y$ be a pair of  smooth superschemes over $\mathbf{k}$. Then  
$$H^{k}_{\mathrm{dR}}(X \times Y)= \bigoplus_{i+j=k} H^{i}_{\mathrm{dR}}(X) \otimes_\mathbf{k} H^{j}_{\mathrm{dR}}(Y).$$	
\end{prop}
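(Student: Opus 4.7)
The plan is to reduce the Künneth formula for de Rham hypercohomology to the Künneth formula for sheaf cohomology, via an explicit decomposition of the de Rham complex of a product. Throughout, let $p_X \colon X\times Y \to X$ and $p_Y \colon X\times Y \to Y$ denote the two projections.

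The first step is to establish a canonical isomorphism of complexes of $\mathcal{O}_{X\times Y}$-modules
$$\Omega^{\bullet}_{X\times Y} \;\cong\; p_X^{*}\Omega^{\bullet}_{X} \otimes_{\mathcal{O}_{X\times Y}} p_Y^{*}\Omega^{\bullet}_{Y}.$$
This is a local statement, so one works over affine opens $U\subset X_{\bbz}$ and $V\subset Y_{\bbz}$. Using local coordinates $(x_1,\dots,x_n,s_1,\dots,s_m)$ on $U$ and $(y_1,\dots,y_p,t_1,\dots,t_q)$ on $V$, the fact that $\mathcal{O}_{X\times Y}(U\times V)=\mathcal{O}_X(U)\otimes_{\mathbf{k}}\mathcal{O}_Y(V)$ as super-commutative algebras gives $\mathrm{T}_{X\times Y}\big|_{U\times V} \cong p_X^{*}\mathrm{T}_X \oplus p_Y^{*}\mathrm{T}_Y$. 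Dualizing yields the analogous decomposition of $\Omega^{1}_{X\times Y}$, and passing to the super-symmetric algebra $\mathbf{S}(\Pi\,\cdot)$ produces the isomorphism of graded algebras above. A direct check on coordinate functions shows that the differential $\mathrm{d}$ on the left-hand side coincides with the total differential of the tensor product of complexes on the right, with signs dictated by the Koszul rule already used in the paper.

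The second step is to pass to hypercohomology. By the Künneth theorem for external products of bounded-below complexes of sheaves (which follows from the projection formula, flat base change along $p_X$ and $p_Y$, and the sheaf-cohomological Künneth isomorphism for quasi-coherent modules), one obtains
$$\mathbb{H}^{k}\!\bigl(X\times Y,\; p_X^{*}\Omega^{\bullet}_{X} \otimes p_Y^{*}\Omega^{\bullet}_{Y}\bigr) \;\cong\; \bigoplus_{i+j=k} \mathbb{H}^{i}(X,\Omega^{\bullet}_{X}) \otimes_{\mathbf{k}} \mathbb{H}^{j}(Y,\Omega^{\bullet}_{Y}),$$
which combined with the first step is exactly the desired formula.

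The main obstacle is the infinite-dimensionality of $\Omega^{\bullet}_X$ in the odd directions, explicitly flagged earlier in the paper as a feature absent from the even case. A priori, this threatens the convergence of the standard Hodge-to-de Rham spectral sequence $E_2^{pq}=H^{p}(X,\Omega^{q}_X)\Rightarrow H^{p+q}_{\mathrm{dR}}(X)$ that underlies the sheaf-level Künneth step. The way I would handle this is to note that each $\Omega^{q}_X$ is still locally free over $\mathcal{O}_X$, so it is flat, and one can compute hypercohomology by Čech resolutions with respect to an affine open cover of $X_{\bbz}\times Y_{\bbz}$ of product form $\{U_\alpha\times V_\beta\}$; the resulting Čech-double complex then factors as the tensor product of the Čech complexes for $X$ and $Y$, and the classical algebraic Künneth for complexes of $\mathbf{k}$-vector spaces applies termwise. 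Once this bookkeeping is in place, the proof in \cite{St} transports verbatim to the super setting because the only super input, namely the coordinate decomposition of $\Omega^{\bullet}_{X\times Y}$, has been supplied in the first step.
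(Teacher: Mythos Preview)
Your proposal is correct and is essentially the same approach the paper has in mind: the paper gives no argument beyond ``by the same argument as in the even case \cite{St}'', and what you have written is precisely a careful unpacking of that Stacks Project argument (decompose $\Omega^{\bullet}_{X\times Y}$ as an external tensor product, then apply sheaf-level K\"unneth via \v{C}ech resolutions over a product affine cover). Your attention to the unboundedness of $\Omega^{\bullet}$ in the super setting is a genuine point that the paper glosses over, and your resolution of it---working with bounded-below complexes of flat sheaves and invoking the algebraic K\"unneth for complexes of $\mathbf{k}$-vector spaces---is the right one.
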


\subsection{Extensions of  $\mathcal{D}_X$-modules}
For a pair of $\mathcal{D}_X$-modules $\mathcal{E}_1$ and $\mathcal{E}_2$,  denote by  
$\mathcal{H}\mathrm{om}_{\mathcal{D}_X}(\mathcal{E}_1,\mathcal{E}_2)$ the sheaf of $\mathcal{D}_X$-module homomorphisms 
from $\mathcal{E}_1$ to $\mathcal{E}_2$. 
The space $\mathrm{Hom}_{\mathcal{D}_X}(
\mathcal{E}_1,\mathcal{E}_2)$ of homomorphisms from $\mathcal{E}_1$ to $\mathcal{E}_2$ coincides with  the global section of $\mathcal{H}\mathrm{om}_{\mathcal{D}_X}(\mathcal{E}_1,\mathcal{E}_2)$. Denote by $\mathcal{E}\mathrm{xt}^{\bullet}_{\mathcal{D}_X}(\mathcal{E}_1, \text{-})$ and  $\Ext^{\bullet}_{\mathcal{D}_X}(\mathcal{E}_1, \text{-})$  the corresponding right derived functors of $\mathcal{H}\mathrm{om}_{\mathcal{D}_X}(\mathcal{E}_1, \text{-})$ and $\mathrm{Hom}_{\mathcal{D}_X}(\mathcal{E}_1, \text{-})$, respectively. The following  theorem relates $\mathcal{E}\mathrm{xt}^{\bullet}_{\mathcal{D}_X}(\mathcal{E}_1, \text{-})$ and  $\Ext^{\bullet}_{\mathcal{D}_X}(\mathcal{E}_1, \text{-})$.
\begin{theorem}\label{spec_dmod}
For any quasi-coherent left $\mathcal{D}_{X}$-modules $\mathcal{E}_1$ and $\mathcal{E}_2$, 	
we have  the spectral sequences 
$$E_{2}^{pq}=H^{p}(X,\mathcal{E}xt^{q}_{\mathcal{D}_X}(\mathcal{E}_1,\mathcal{E}_2)) \Rightarrow \Ext_{\mathcal{D}_X}^{p+q}(\mathcal{E}_1,\mathcal{E}_2).$$
\end{theorem}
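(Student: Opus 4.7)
The plan is to realize $\mathrm{Hom}_{\mathcal{D}_X}(\mathcal{E}_1,-)$ as a composition of two functors and then invoke Grothendieck's spectral sequence for the derived functors of a composition. Concretely, the tautological identification
$$\mathrm{Hom}_{\mathcal{D}_X}(\mathcal{E}_1,\mathcal{F})=\Gamma\bigl(X_{\bbz},\mathcal{H}\mathrm{om}_{\mathcal{D}_X}(\mathcal{E}_1,\mathcal{F})\bigr)$$
for every $\mathcal{D}_X$-module $\mathcal{F}$ expresses the global Hom as $\Gamma(X_{\bbz},-)\circ\mathcal{H}\mathrm{om}_{\mathcal{D}_X}(\mathcal{E}_1,-)$. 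The right derived functors of these two pieces are, respectively, $\mathcal{E}\mathrm{xt}^{\bullet}_{\mathcal{D}_X}(\mathcal{E}_1,-)$ and sheaf cohomology $H^{\bullet}(X_{\bbz},-)$, so the spectral sequence will follow at once from Grothendieck's theorem, provided we verify the abstract hypotheses.

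The first hypothesis is the existence of enough injectives in the source category. Viewing $\mathcal{D}_X$ as a sheaf of associative $\mathbf{k}$-superalgebras on $X_{\bbz}$, left $\mathcal{D}_X$-modules form a Grothendieck abelian category, and hence admit enough injectives; the quasi-coherence assumption ensures that the relevant objects actually live in a well-behaved subcategory but does not interfere with the injective resolution argument.

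The essential step is the second hypothesis: every injective $\mathcal{D}_X$-module $\mathcal{I}$ must be sent by $\mathcal{H}\mathrm{om}_{\mathcal{D}_X}(\mathcal{E}_1,-)$ to a $\Gamma(X_{\bbz},-)$-acyclic sheaf. I would prove the stronger statement that $\mathcal{H}\mathrm{om}_{\mathcal{D}_X}(\mathcal{E}_1,\mathcal{I})$ is flasque, via the standard extension-by-zero trick. For an open embedding $j:U\hookrightarrow X_{\bbz}$, the sheaf $j_{!}(\mathcal{E}_1|_{U})$ is canonically a sub-$\mathcal{D}_X$-module of $\mathcal{E}_1$. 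Applying $\mathrm{Hom}_{\mathcal{D}_X}(-,\mathcal{I})$ to the monomorphism $j_{!}(\mathcal{E}_1|_{U})\hookrightarrow\mathcal{E}_1$ yields a surjection (because $\mathcal{I}$ is injective), and the adjunction
$$\mathrm{Hom}_{\mathcal{D}_X}\bigl(j_{!}(\mathcal{E}_1|_{U}),\mathcal{I}\bigr)=\mathrm{Hom}_{\mathcal{D}_{X}|_{U}}(\mathcal{E}_1|_{U},\mathcal{I}|_{U})=\Gamma\bigl(U,\mathcal{H}\mathrm{om}_{\mathcal{D}_X}(\mathcal{E}_1,\mathcal{I})\bigr)$$
identifies the target with sections over $U$. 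This is exactly the flasqueness of $\mathcal{H}\mathrm{om}_{\mathcal{D}_X}(\mathcal{E}_1,\mathcal{I})$.

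The potential obstacle is not in the homological algebra, which is formal, but in checking that the basic sheaf-theoretic constructions on superschemes, particularly the extension by zero functor $j_{!}$ applied to a $\mathcal{D}_X$-module, the adjunction above, and the fact that injective $\mathcal{D}_X$-modules exist in adequate supply, carry over unchanged from the classical setting. Because the underlying topological space is $X_{\bbz}$ and none of the arguments require commutativity of the sheaf of rings $\mathcal{D}_X$ nor parity-pure modules, the classical proofs should transfer verbatim, so this is more a bookkeeping task than a genuine obstruction.
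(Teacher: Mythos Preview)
Your proposal is correct and follows essentially the same approach as the paper: both realize $\mathrm{Hom}_{\mathcal{D}_X}(\mathcal{E}_1,-)$ as $\Gamma(X_{\bbz},-)\circ\mathcal{H}\mathrm{om}_{\mathcal{D}_X}(\mathcal{E}_1,-)$ and apply the Grothendieck spectral sequence, with the key acyclicity hypothesis verified by showing that injective $\mathcal{D}_X$-modules are sent to flasque sheaves. In fact you supply more detail than the paper, which simply appeals to ``the same argument as in the case of $\mathcal{O}_X$-modules'' for the flasqueness step that you spell out via $j_{!}$.
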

\begin{proof}
Let $F: \mathscr{A} \rightarrow \mathscr{B} $ and $G: \mathscr{B} \rightarrow \mathscr{C}$ 
be a pair of  morphisms of abelian categories. Assume that any injective object $J$ of $\mathscr{A}$ is $G$-acyclic,
namely, $R^{i}G(F(J))=0$ for all $i>0$.
In this case, we have the following  spectral sequence due to Grothendick    
$$E_2^{pq}=RG^{p}(RF^{q}(\mathcal{E})) \Rightarrow R^{p+q}(G\circ F)(\mathcal{E}).$$
Now let $\mathscr{A}=\mathscr{B}$ be the category of $\mathcal{D}_X$-modules, $\mathscr{C}$ be category of abelian groups, $F=\mathcal{H}om_{\mathcal{D}_X}(\mathcal{E}_1,\text{-})$ and $G=R\Gamma(X,\text{-})$.	
Note that $\Gamma(X,\mathcal{H}om_{\mathcal{D}_X}(\mathcal{E}_1,\mathcal{E}))=\Hom_{\mathcal{D}_X}  (\mathcal{E}_1,\mathcal{E})$.  By  the same argument as in the case of $\mathcal{O}_X$-modules, we can show that   $\mathcal{H}om_{\mathcal{D}_X}(\mathcal{E}_1,\text{-})$
sends quasi-coherent injective $\mathcal{D}_X$-modules to flasque sheaves, which are $G$-acyclic.  The theorem follows from the Grothendick's 
 spectral sequence.
\end{proof}

Let $\mathrm{Sp}^{\bullet}_{X}$ be the Spencer complex, which is defined 
as follows;  $\mathrm{Sp}^{i}_{X}:=\mathcal{D}_X \otimes_{\mathcal{O}_X} \mathbf{S}^{i} (\Pi(\mathrm{T}_X))$, the differential $\delta: \mathrm{Sp}^{i}_{X}  \rightarrow  \mathrm{Sp}^{i-1}_{X}$ at the $i$ degree is  given by
\begin{align*}
	\delta(P \otimes \theta_1 \theta_2 \cdots  \theta_n )=&\sum_{k} (-1)^{n_{\theta_i}}P\theta_i \otimes   \theta_1 \cdots  \hat{\theta}_k \cdots  \theta_n +  \\
	&\sum_{k<l} (-1)^{n_{\theta_k}+n_{\theta_l}+|\theta_k||\theta_l|} P \otimes [\theta_k, \theta_l] \theta_1 \cdots  \hat{\theta}_k \cdots \hat{\theta}_l  \cdots \theta_n.
\end{align*}
Here, the integer $n_{\theta_i}<n$ is defined  as follows. If $|\theta_k|=0$, $n_{\theta_i}=0$; if 
$|\theta_k|=1$, then $n_{\theta_k}=\sum_{i=1}^{k-1}|\theta_k||\theta_i|$. Let $\epsilon: \mathrm{Sp}^{0}_{X}=\mathcal{D}_X \rightarrow \mathcal{O}_X $ be the $\mathcal{D}_X$-module homomorphism given by $\epsilon(P):=P\cdot1$ for each local section $P$ of $\mathcal{D}_X$.
 
\begin{prop}[Spencer resolution]
The  complex 
$\begin{tikzcd}[cramped, sep=small]
	\mathrm{Sp}^{\bullet}_{X}  \arrow[r,  "\epsilon" ] &  \mathcal{O}_X \arrow[r]  & 0
\end{tikzcd}$ 
is a locally free resolution  of the  $\mathcal{D}_X$-module $\mathcal{O}_X$. 
\end{prop}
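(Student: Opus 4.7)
The plan is to work locally on an affine open $U \subset X_{\bbz}$ with coordinates $x_1, \ldots, x_n$ (even) and $s_1, \ldots, s_m$ (odd), and to reduce exactness of the Spencer complex to that of a super Koszul complex via the order filtration on $\mathcal{D}_X$. First I would verify $\delta^{2} = 0$ and $\epsilon \circ \delta = 0$ by direct computation; these follow formally from the super Jacobi identity for $[\cdot,\cdot]$ on $\mathrm{T}_X$ and the graded Leibniz rule, with the signs $n_{\theta_i}$ precisely compensating for parity moves inside $\mathbf{S}(\Pi(\mathrm{T}_X))$. Since $\mathrm{T}_X$ is a locally free $\mathcal{O}_X$-module of finite rank, each $\mathbf{S}^{i}(\Pi(\mathrm{T}_X))$ is locally free over $\mathcal{O}_X$, so $\mathrm{Sp}^{i}_X = \mathcal{D}_X \otimes_{\mathcal{O}_X} \mathbf{S}^{i}(\Pi(\mathrm{T}_X))$ is locally free as a $\mathcal{D}_X$-module, and exactness may be checked on the stalks.

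The key step is to introduce the increasing filtration
$$F_{p}\mathrm{Sp}^{i}_X \; := \; F^{p-i}(\mathcal{D}_X) \otimes_{\mathcal{O}_X} \mathbf{S}^{i}(\Pi(\mathrm{T}_X)),$$
where $F^{\bullet}(\mathcal{D}_X)$ is the order filtration of Section~3.1. A short check shows that $\delta$ preserves this filtration: its first summand multiplies $P$ by a vector field (raising $\mathcal{D}_X$-order by one) while dropping one tensor factor (decreasing $i$ by one), and hence lands in $F_{p}\mathrm{Sp}^{i-1}_X$; the bracket summand produces a vector field $[\theta_k,\theta_l]$ inside the symmetric algebra and does not change the $\mathcal{D}_X$-order, so it strictly lowers the filtration level. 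On the associated graded only the first summand survives, and via the principal-symbol identification $\gr(\mathcal{D}_X) \cong \mathbf{S}_{\mathcal{O}_X}(\mathrm{T}_X)$ the graded complex becomes the super Koszul complex
$$\mathbf{S}_{\mathcal{O}_X}(\mathrm{T}_X) \otimes_{\mathcal{O}_X} \mathbf{S}(\Pi(\mathrm{T}_X))$$
whose differential is, in local coordinates, the super Euler contraction $\sum_{j} \xi_{j}\,\partial/\partial(\Pi\partial_{x_j}) + \sum_{k} \eta_{k}\,\partial/\partial(\Pi\partial_{s_k})$, where $\xi_{j}, \eta_{k}$ are the symbols of $\partial_{x_j}, \partial_{s_k}$.

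Locally this super Koszul complex factors as the tensor product of the classical Koszul resolution on the even pairs $(\xi_{j}, \Pi\partial_{x_j})$ with the odd (Grassmann/de Rham-type) Koszul resolution on the odd pairs $(\eta_{k}, \Pi\partial_{s_k})$; each factor resolves $\mathcal{O}_X(U)$, and therefore so does their tensor product. Once the associated graded is seen to be acyclic in positive degrees with $H^{0}=\mathcal{O}_X$, a standard argument using the bounded-below exhaustive filtration (equivalently, the associated convergent spectral sequence) transfers the exactness to $\mathrm{Sp}^{\bullet}_X \xrightarrow{\epsilon} \mathcal{O}_X$, yielding the desired locally free resolution.

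The main technical obstacle I expect is the careful bookkeeping of super signs, both in establishing $\delta^{2}=0$ on the nose and in confirming that the induced differential on $\gr\,\mathrm{Sp}^{\bullet}_X$ is precisely the super Koszul differential with the correct signs. Because the parity reversal $\Pi$ swaps the roles of polynomial and exterior generators, one must verify that the super Koszul complex really decomposes as a tensor product of an even Koszul complex with an odd one (with parities interchanged); once this factorization is in place the final acyclicity step is conceptual rather than a combinatorial grind.
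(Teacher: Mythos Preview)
Your proposal is correct and follows essentially the same route as the paper: filter the Spencer complex by the order filtration on $\mathcal{D}_X$, identify the associated graded with the super Koszul complex, and deduce exactness. The paper's proof is a one-line version of yours, citing \cite{No} for the exactness of the super Koszul complex rather than sketching the tensor-factorization argument you give.
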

  \begin{proof}
The  degree filtration of $\mathrm{Sp}^{\bullet}_{X}$  is exactly the super Kozsul complex, which is exact by \cite{No}.
\end{proof}

\begin{prop} \label{extension of D-mods}
For any left coherent $\mathcal{D}_{X}$-module $\mathcal{E}$, we have 
$$H^{i}_\mathrm{{dR}}(X, \mathcal{E})=\mathrm{Ext}^{i}_{\mathcal{D}_{X}}(\mathcal{O}_X,\mathcal{E})$$
for $i\geq 0$.
\end{prop}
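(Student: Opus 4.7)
The plan is to use the Spencer resolution from the preceding proposition to compute $\mathrm{Ext}^{\bullet}_{\mathcal{D}_X}(\mathcal{O}_X, \mathcal{E})$ concretely via the de Rham complex $\Omega^{\bullet}_X(\mathcal{E})$, combining it with the spectral sequence of Theorem \ref{spec_dmod}.

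First I would observe that each term $\mathrm{Sp}^{i}_{X} = \mathcal{D}_X \otimes_{\mathcal{O}_X} \mathbf{S}^{i}(\Pi(\mathrm{T}_X))$ of the Spencer resolution is locally free, hence locally projective, as a left $\mathcal{D}_X$-module, since $\mathrm{T}_X$ is locally free over $\mathcal{O}_X$ by smoothness of $X$. Consequently the Spencer resolution can be used to compute sheaf $\mathcal{E}\mathrm{xt}$: the sheaf $\mathcal{E}\mathrm{xt}^{q}_{\mathcal{D}_X}(\mathcal{O}_X, \mathcal{E})$ is the $q$-th cohomology sheaf of $\mathcal{H}\mathrm{om}_{\mathcal{D}_X}(\mathrm{Sp}^{\bullet}_X, \mathcal{E})$.

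Next I would identify $\mathcal{H}\mathrm{om}_{\mathcal{D}_X}(\mathrm{Sp}^{\bullet}_X, \mathcal{E})$ with $\Omega^{\bullet}_X(\mathcal{E})$ as a complex of sheaves. On each term, the tensor-hom adjunction together with the super duality $\mathbf{S}^{i}(\Pi(\mathrm{T}_X))^{*} \cong \mathbf{S}^{i}(\Pi\Omega_{X}^{1}) = \Omega^{i}_X$ yields an $\mathcal{O}_X$-linear isomorphism
\begin{equation*}
\mathcal{H}\mathrm{om}_{\mathcal{D}_X}(\mathrm{Sp}^{i}_X, \mathcal{E}) \cong \mathcal{H}\mathrm{om}_{\mathcal{O}_X}(\mathbf{S}^{i}(\Pi(\mathrm{T}_X)), \mathcal{E}) \cong \Omega^{i}_X \otimes_{\mathcal{O}_X} \mathcal{E} = \Omega^{i}_X(\mathcal{E}).
\end{equation*}
A direct computation in local coordinates $x_1, \ldots, x_n, s_1, \ldots, s_m$ then shows that the differential induced on the left by the Spencer differential $\delta$ corresponds under this isomorphism to the de Rham differential $\mathrm{d}_{\mathcal{E}}$; the sign factors $(-1)^{n_{\theta_i}}$ and $(-1)^{|\theta_k||\theta_l|}$ built into $\delta$ are precisely what is required to reproduce the super Koszul signs appearing in $\mathrm{d}_{\mathcal{E}}$.

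Combining these two steps gives $\mathcal{E}\mathrm{xt}^{q}_{\mathcal{D}_X}(\mathcal{O}_X, \mathcal{E}) \cong \mathcal{H}^{q}(\Omega^{\bullet}_X(\mathcal{E}))$. Substituting into the spectral sequence of Theorem \ref{spec_dmod} and comparing with the standard hypercohomology spectral sequence \eqref{spec_sequ} for $\Omega^{\bullet}_X(\mathcal{E})$---both of which arise from a Cartan--Eilenberg injective resolution of the common complex $\mathcal{H}\mathrm{om}_{\mathcal{D}_X}(\mathrm{Sp}^{\bullet}_X, \mathcal{E})$ and therefore share the same differentials at every page---one concludes
\begin{equation*}
\mathrm{Ext}^{i}_{\mathcal{D}_X}(\mathcal{O}_X, \mathcal{E}) = \mathbb{H}^{i}(X, \Omega^{\bullet}_X(\mathcal{E})) = H^{i}_{\mathrm{dR}}(X, \mathcal{E}).
\end{equation*}
The main obstacle is the sign verification in the middle step: although this is the familiar Koszul--de Rham comparison in the purely even setting, the simultaneous presence of even and odd coordinates makes $\Omega^{\bullet}_X$ infinite in each chart and forces the super signs $(-1)^{|\cdot|}$ to be tracked from scratch rather than imported from the classical case.
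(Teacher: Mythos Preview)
Your proposal is correct and follows essentially the same route as the paper: use the Spencer resolution to identify $\mathcal{E}\mathrm{xt}^{q}_{\mathcal{D}_X}(\mathcal{O}_X,\mathcal{E})$ with $\mathcal{H}^{q}(\Omega^{\bullet}_X(\mathcal{E}))$ via $\mathcal{H}\mathrm{om}_{\mathcal{D}_X}(\mathrm{Sp}^{\bullet}_X,\mathcal{E})\cong\Omega^{\bullet}_X(\mathcal{E})$, then compare the spectral sequence of Theorem~\ref{spec_dmod} with the hypercohomology spectral sequence~\eqref{spec_sequ}. The paper's proof is terser on the sign check and the spectral-sequence comparison, but the argument is the same.
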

\begin{proof}
Since $\mathrm{Sp}^{\bullet}_X$ is a locally free resolution of $\mathcal{O}_X$ as $\mathcal{D}_{X}$-module, we have 
$$\mathcal{E}xt^{i}_{\mathcal{D}_{X}}(\mathcal{O}_X,\mathcal{E})= \mathcal{H}^{i}(\mathcal{H}om_{\mathcal{D}_{X}}(\mathrm{Sp}^{\bullet}_X,\mathcal{E})).$$ Note that $\mathcal{H}om_{\mathcal{D}_{X}}(\mathrm{Sp}^{\bullet}_X,\mathcal{E})=\Omega_{X}^{\bullet}(\mathcal{E})$.	 
The proposition follows from Theorem \ref{spec_dmod} and the spectral sequence \eqref{spec_sequ}.
\end{proof}

The following theorem  is due to Polishchuk. 
\begin{theorem}[\cite{Po}, Theorem 1.1]\label{equi_dR_HOM} 
Let $X=(X_{\bbz},\mathcal{O}_X)$ be a smooth superscheme.	
There is an isomorphism 
$H^{\bullet}_{\mathrm{dR}}(X) \rightarrow H^{\bullet}_{\mathrm{dR}}(X_{\bbz})$.
\end{theorem}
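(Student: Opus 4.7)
The strategy is to show that the natural surjection $\pi : \Omega_X^{\bullet} \twoheadrightarrow \Omega_{X_{\bbz}}^{\bullet}$ (extending $\iota^{\sharp}: \mathcal{O}_X \twoheadrightarrow \mathcal{O}_{X_{\bbz}}$ to a morphism of sheaves of DG algebras on $X_{\bbz}$) is a quasi-isomorphism. Since $X$ and $X_{\bbz}$ share the same underlying topological space and hypercohomology is insensitive to quasi-isomorphism, the theorem follows by applying $\mathbb{H}^{\bullet}(X_{\bbz},-)$ to $\pi$.

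Being a quasi-isomorphism is a local statement on $X_{\bbz}$. Around any $x \in X_{\bbz}$, I would choose an affine neighborhood $U$ small enough that smoothness of $X$ yields a splitting $\mathcal{O}_X(U) \cong \mathcal{O}_{X_{\bbz}}(U) \otimes_{\mathbf{k}} \bigwedge(s_1,\ldots,s_m)$ for some odd coordinates $s_1,\ldots,s_m$. This induces a tensor factorization of complexes
\[
\Omega_X^{\bullet}(U) \cong \Omega_{X_{\bbz}}^{\bullet}(U) \otimes_{\mathbf{k}} K^{\bullet}, \qquad K^{\bullet} := \bigwedge(s_1,\ldots,s_m) \otimes_{\mathbf{k}} \mathbf{S}(ds_1,\ldots,ds_m),
\]
with differential on $K^{\bullet}$ acting by $s_j \mapsto ds_j$ and $ds_j \mapsto 0$, and under this factorization $\pi|_U$ corresponds to tensoring with the augmentation $\epsilon : K^{\bullet} \to \mathbf{k}$ that kills each $s_j$ and each $ds_j$. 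It therefore suffices to show that $\epsilon$ is a quasi-isomorphism.

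The heart of the argument is a Poincar\'e-type lemma for $K^{\bullet}$. I would introduce the odd derivation $h := \sum_{j=1}^m s_j\, \partial/\partial(ds_j)$ on $K^{\bullet}$ and verify the Cartan identity $dh + hd = E$, where $E := \sum_{j=1}^m \bigl(s_j \partial_{s_j} + ds_j\, \partial_{ds_j}\bigr)$ is the Euler derivation counting total $s$-weight. The weight-zero part of $K^{\bullet}$ is $\mathbf{k}$, whereas $E$ acts by $n \cdot \mathrm{id}$ on the weight-$n$ component for $n\geq 1$; hence $h/n$ is a contracting homotopy on the positive-weight part, proving $\epsilon$ is a quasi-isomorphism. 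Alternatively, iterated K\"unneth via Proposition \ref{KUN-FOR} applied to the decomposition $\mathbb{A}^{n|m} = \mathbb{A}^n \times (\mathbb{A}^{0|1})^{\times m}$ reduces the assertion to the single odd line, where it is an immediate computation.

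The principal technical obstacle is the local splitting step: one needs the decomposition $\mathcal{O}_X(U) \cong \mathcal{O}_{X_{\bbz}}(U) \otimes_{\mathbf{k}} \bigwedge V$ on an honest Zariski open rather than merely on the formal neighborhood $\hat{\mathcal{O}}_{X,x} \cong \mathbf{S}[[\mathrm{Der}_x^{*}]]$. For smooth supervarieties this is standard — one lifts a basis of the odd part of the cotangent space at $x$ to odd sections of $\mathcal{O}_X$ over a sufficiently small affine $U$ and uses smoothness to check that these generate $\mathcal{O}_X(U)$ as a Grassmann envelope over $\mathcal{O}_{X_{\bbz}}(U)$ — but the bookkeeping must be arranged carefully so that the homotopy $h$ constructed above is defined on the actual sheaf and not just on a completion. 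Once local quasi-isomorphism of $\pi$ is established, it follows that $\pi$ is a quasi-isomorphism of complexes of sheaves on $X_{\bbz}$, and the desired isomorphism $H^{\bullet}_{\mathrm{dR}}(X) \xrightarrow{\sim} H^{\bullet}_{\mathrm{dR}}(X_{\bbz})$ drops out of hypercohomology.
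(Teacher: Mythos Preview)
Your proposal is correct and follows essentially the same route as the paper: reduce to a local statement, use a local splitting $X|_U \cong X_{\bbz}|_U \times \mathbf{k}^{0|m}$, and then invoke the K\"unneth-type factorization together with the vanishing of the higher de Rham cohomology of $\mathbf{k}^{0|m}$. The paper phrases the reduction via the hypercohomology spectral sequence and simply asserts $H^i_{\mathrm{dR}}(\mathbf{k}^{0|m})=0$ for $i>0$, whereas you supply that vanishing explicitly via the contracting homotopy $h=\sum_j s_j\,\partial/\partial(ds_j)$; this is a welcome detail but not a different strategy.
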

We give an different proof here since the construction of isomorphism  will be needed  later.
\begin{proof}
By the spectral sequence \eqref{spec_sequ}, we only need to prove 
  $$\mathcal{H}^{i}(\Omega_{X}^{\bullet})=\mathcal{H}^{i}(\Omega_{X_{\bbz}}^{\bullet}).$$  
This argument is local, so we may assume that $X=X_{\bbz}\times\mathbf{k}^{(0|m)}$. 
The theorem follows from the K\"uneth formula (Proposition \ref{KUN-FOR})  and the fact that $H_{\mathrm{dR}}^0(\mathbf{k}^{(0|m)})=\mathbf{k}$, and $H_{\mathrm{dR}}^i(\mathbf{k}^{(0|m)})=0$ for all $i>0$.	 
\end{proof}

This theorem has a categorical explanation as follows.
The inclusion $\iota: X_{\bbz} \hookrightarrow X $ induces a pull-back 
$$ \iota^{*}: \mathcal{D}_{X}\text{-}\mathrm{mod} \rightarrow \mathcal{D}_{X_{\bbz}}\text{-}\mathrm{mod}$$ 
of $\mathcal{D}$-modules.  Penkov proved that   $\iota^{*}$ is an equivalence of categories in the holomorphic setting \cite{Pe2}. His argument is valid  here as well. Therefore,  Proposition \ref{extension of D-mods} implies  Theorem \ref{equi_dR_HOM}.

From now on, we will encounter  pro-superschemes  frequently. 
As in the even case\cite{BK}, we may regard them as usual superschemes.
\subsection{Jet bundles}   
Let $\Delta$ be the diagonal of $X^2:=X\times X$,  $\hat{\Delta}$ be the completion of  $X^{2}$ along  $\Delta$ and $\pi_1$, $\pi_2$ be the projection of 
$\hat{\Delta}$ onto the first and second factor. The jet bundle $J^{\infty}(\mathcal{E})$ of a vector bundle $\mathcal{E}$ over $X$ is defined as $\pi_{1 *}\pi_2^*(\mathcal{E})$.  Completing the relative de Rham differential  $\mathrm{d}_{X^2/X}: \mathcal{O}_{X^2} \rightarrow \Omega^1_{X^2/X}$ along $\Delta$, we have the Grothendick connection $\nabla: J^{\infty}(\mathcal{O}_X) \rightarrow \Omega^1_X \otimes_{\mathcal{O}_X} J^{\infty}(\mathcal{O}_X)$. We also have a flat connection on $J^{\infty}(\mathcal{E})=J^{\infty}(\mathcal{O}_X)\otimes_{\mathcal{O}_X} \mathcal{E} $ by taking the tensor product of  $\nabla$ and the trivial connection on $\mathcal{E}$.
Let $J^{\infty}(\Omega^{\bullet}_{X}(\mathcal{E}))$ stand for the associated de Rham complex. 
In the case of $X=\Spec A$ being an affine superscheme, the above notions can be described explicitly as follows
 \begin{align*}
J^{\infty}(\mathcal{O}_X)&= \widehat{A\otimes_{\mathbf{k}} A};	\\
\Omega_X^{1}\otimes_A J^{\infty}(\mathcal{O}_X)&=\Omega_X^{1}\otimes_A \widehat{A\otimes_{\mathbf{k}} A}=\widehat{\Omega_X^{1}\otimes_{\mathbf{k}} A}; \\
\nabla(a\otimes b)&=\mathrm{d}(a)\otimes b \text{ \  for all $a\in A$ and $b \in J^{\infty}(\mathcal{O}_X)$ or $J^{\infty}(\mathcal{E})$}.
\end{align*}	
Here, for an $A \otimes A$-module $M$, $\widehat{M}$ denotes the completion of $M$  with respect to the ideal $I$ corresponding to $\Delta$.  

We have the following generalization of [Theorem 4.4,\cite{Ye2}] to the super case.
\begin{prop}\label{de Rham to Cech}
	For a quasi-coherent sheaf $\mathcal{E}$ on $X$,  the complex $J^{\infty}(\Omega^{\bullet}_{X}(\mathcal{E}))$ is a resolution of $\mathcal{E}$ as $\mathcal{O}_X$-modules, i.e,  $J^{\infty}(\Omega^{\bullet}_{X}(\mathcal{E}))$ is exact.
\end{prop}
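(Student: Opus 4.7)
My plan is to localize and reduce to a formal super Poincar\'e lemma, following the template of Yekutieli's Theorem 4.4. The problem is local on $X_{\bbz}$, so I assume $X=\Spec A$ is smooth affine with a coordinate system $x_1,\ldots,x_n$ (even), $s_1,\ldots,s_m$ (odd); set $E=\Gamma(X,\mathcal{E})$. With the displacement generators $y_i:=1\otimes x_i-x_i\otimes 1$ and $t_j:=1\otimes s_j-s_j\otimes 1$ of the diagonal ideal of $A\otimes_{\mathbf{k}}A$, the completion $\widehat{A\otimes A}$ identifies, as a left $A$-algebra, with $R:=A[[y_1,\ldots,y_n]][t_1,\ldots,t_m]$, giving
$$\Gamma(X,J^{\infty}(\mathcal{E}))=R\otimes_A E.$$
From $\nabla(a\otimes b)=d(a)\otimes b$ together with $1\otimes x_i=x_i+y_i$, $1\otimes s_j=s_j+t_j$, one obtains $\nabla(y_i)=-dx_i$ and $\nabla(t_j)=-ds_j$. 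Hence the de Rham differential on $K^\bullet:=\Omega_X^\bullet\otimes_A J^\infty(\mathcal{E})$ decomposes as $D=d_{\mathrm{dR}}-\delta$, where $\delta:=\sum_i dx_i\,\partial_{y_i}+\sum_j ds_j\,\partial_{t_j}$ and $d_{\mathrm{dR}}$ is the de Rham of $A$ extended trivially in the $(y,t)$-directions; the augmentation $\mathcal{E}\to K^0$ is the Taylor expansion $e\mapsto e(x+y,s+t)$.

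The key step is to trivialize the $d_{\mathrm{dR}}$-part. Consider the $\mathbf{k}$-algebra automorphism $\sigma$ of $R$ characterized by $\sigma(x_i)=x_i+y_i$, $\sigma(s_j)=s_j+t_j$, fixing $y_i$ and $t_j$. Geometrically, $\sigma$ exchanges the two tensor factors inside $R=\widehat{A\otimes A}$, so that the Taylor embedding becomes the naive inclusion $a\mapsto a\otimes 1$. A direct super-Leibniz computation (verified on the generators $x_i,s_j,y_i,t_j$ and propagated by Leibniz) yields
$$\sigma^{-1}\circ D\circ\sigma=-\delta,$$
and identifies the complex $(K^\bullet,D)$ with its Taylor augmentation with $(\Omega_X^\bullet\otimes_A R\otimes_A E,\,-\delta)$ augmented by $e\mapsto 1\otimes e$.

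Identifying $dx_i\leftrightarrow dy_i$ and $ds_j\leftrightarrow dt_j$, $\delta$ becomes the full de Rham differential of $R$ in the $(y,t)$-variables, so the complex above is the tensor product over $A$ of $E$ with the de Rham complex of the formal super-disk $\widehat{\mathbb{A}^{n|m}}$ with $A$-coefficients. By the formal super Poincar\'e lemma---the classical formal Poincar\'e lemma in the even direction combined with the computation $H^\bullet_{\mathrm{dR}}(\mathbf{k}^{(0|m)})=\mathbf{k}$ in degree zero used in the proof of Theorem~\ref{equi_dR_HOM}---the augmented complex $0\to A\to R\to R\otimes_A\Omega_X^1\to\cdots$ is an $A$-flat resolution of $A$. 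Tensoring with $E$ over $A$ preserves exactness, so $J^{\infty}(\Omega_X^\bullet(\mathcal{E}))$ is exact with augmentation $\mathcal{E}$, as required.

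The main technical subtlety is the conjugation identity $\sigma^{-1}\circ D\circ\sigma=-\delta$, which is the formal content of the assertion that Taylor expansions are the unique horizontal lifts of sections. The super signs coming from the odd variables $s_j,t_j$ and the odd forms $dx_i$ (odd under the parity shift $\Pi$ in $\Omega_X^\bullet=\mathbf{S}(\Pi\Omega_X^1)$) must be tracked carefully, and the extension of $\sigma$ across $\mathcal{E}$ is handled by passing to an affine cover trivializing $\mathcal{E}$ and using the $A$-flatness of $R$; otherwise the combinatorics reproduce Yekutieli's classical argument.
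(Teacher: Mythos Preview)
Your argument is correct and follows essentially the same route as the paper: localize, identify $J^{\infty}(\mathcal{O}_X)$ with $A[[\mathbf{y}]]$ via displacement coordinates, reduce the Grothendieck connection to the formal de Rham differential in the displacement variables, invoke the formal super Poincar\'e lemma, and then tensor with $\mathcal{E}$ using $A$-flatness. The paper compresses your conjugation step into the \'etale base-change identification $\Omega^{\bullet}_{\mathbf{k}[\mathbf{x}]}\otimes_{\mathbf{k}[\mathbf{x}]}A[[\mathbf{y}]]\cong \widehat{\Omega^{\bullet}_{A}\otimes_{\mathbf{k}}A}$ and a citation to Yekutieli, whereas you make the trivialization explicit via the automorphism $\sigma$ and the identity $\sigma^{-1}\circ D\circ\sigma=-\delta$; this is the same content unpacked.

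Two small remarks. First, your heuristic that ``$\sigma$ exchanges the two tensor factors'' is not literally true (the genuine swap negates $y_i$, while your $\sigma$ fixes it); fortunately you never use this description, and your verification on generators is what matters. Second, the aside about ``passing to an affine cover trivializing $\mathcal{E}$'' is unnecessary and would not cover general quasi-coherent $\mathcal{E}$; the correct argument is the one you already gave a paragraph earlier, namely that the augmented formal de Rham complex is an $A$-flat resolution of $A$ (with respect to the second $A$-structure, for which $D$ is linear since $\nabla(1\otimes a)=0$), so $-\otimes_A E$ preserves exactness. With that clarification your proof is complete.
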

\begin{proof}
	We use the same argument as in even case  \cite{Ye2}. 
 Since the statement is local, we may assume that $X=\Spec A$ and $X$ admits an $\acute{\text{e}}$tale coordinate system 
	$(x_1,\ldots,x_n)$ with $\mathcal{O}_X=\mathcal{O}_{X_{\bbz}} \otimes_{\mathbf{k}} \bigwedge^{\bullet} (x_{n+1},\ldots,x_{n+m})$. Let $\mathbf{x}$ be the corresponding $\acute{\text{e}}$tale coordinate system $ \{ x_1,\ldots,x_n \mid x_{n+1},\ldots,x_{n+m} \}$. 
	
  Then we have a natural isomorphism 
 $A \otimes_{\mathbf{k}[\mathbf{x}]} \Omega^{\bullet}_{\mathbf{k}[\mathbf{x}]}=\Omega^{\bullet}_{A}$ 	 and hence 
 \begin{equation}\label{equ 3.3}
 \Omega^{\bullet}_{\mathbf{k}[\mathbf{x}]}\otimes_{\mathbf{k}[\mathbf{x}]} A\otimes_{\mathbf{k}}A=  \Omega^{\bullet}_{A}\otimes_{\mathbf{k}} A.
 \end{equation}
 
We have $\hat{\mathcal{O}}_{X,x}=\mathbf{k}[[\mathbf{x}]]$ and $ \widehat{A\otimes_{\mathbf{k}} A}=A[[\mathbf{y}]]$ as completed algebras. Here $\mathbf{y}$ stands for the coordinate system  given by  $y_i=x_i\otimes 1 - 1 \otimes x_i \in A\otimes_{\mathbf{k}} A $.
	 Completing the two sides of \eqref{equ 3.3} with respect to the $I$-adic topology, we get an isomorphism 
	$$\Omega^{\bullet}_{\mathbf{k}[\mathbf{x}]}  \otimes_{\mathbf{k}[\mathbf{x}]} A[[\mathbf{y}]]  \longrightarrow \widehat{\Omega_{A}^{\bullet} \otimes_{\mathbf{k}} A}.$$
The first complex is clearly  a resolution of $\mathcal{O}_{X}$.  
Finally, taking the tensor product of  $J^{\infty}(\Omega^{\bullet}_{X})$  and  $\mathcal{E}$, we complete the proof.
\end{proof}

Since hypercohomology is invariant under the quasi-isomorphism of complexes, Proposition \ref{de Rham to Cech} implies
\begin{corollary} \label{coro_3.7} 
	For each vector bundle $\mathcal{E}$ on $X$, we have 
	$$\mathrm{H}^{i}_{\mathrm{dR}}(X, J^{\infty}(\mathcal{E}))=H^{i}(X,\mathcal{E})$$ 
	for all $i \geq 0$.
\end{corollary}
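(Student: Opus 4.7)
The plan is to derive the corollary directly from Proposition \ref{de Rham to Cech} together with the general fact that hypercohomology is invariant under quasi-isomorphism of bounded-below complexes of sheaves.

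First I would unravel the definition: by construction of the de Rham cohomology of a $\mathcal{D}_X$-module one has
$$H^{i}_{\mathrm{dR}}(X, J^{\infty}(\mathcal{E})) = \mathbb{H}^{i}(\Omega^{\bullet}_X(J^{\infty}(\mathcal{E}))).$$
Next I would identify this complex with the one appearing in Proposition \ref{de Rham to Cech}. Since $J^{\infty}(\mathcal{E}) = J^{\infty}(\mathcal{O}_X)\otimes_{\mathcal{O}_X}\mathcal{E}$, and the flat connection on $J^{\infty}(\mathcal{E})$ is by definition the tensor product of the Grothendieck connection on $J^{\infty}(\mathcal{O}_X)$ with the trivial connection on $\mathcal{E}$, associativity of the tensor product over $\mathcal{O}_X$ produces a canonical isomorphism of complexes of sheaves
$$\Omega^{\bullet}_X(J^{\infty}(\mathcal{E})) \;\cong\; J^{\infty}(\Omega^{\bullet}_X(\mathcal{E})).$$

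Proposition \ref{de Rham to Cech} then tells me that the right-hand complex is a resolution of $\mathcal{E}$ as an $\mathcal{O}_X$-module: the canonical augmentation $\mathcal{E}\hookrightarrow J^{\infty}(\mathcal{E})$ extends to a quasi-isomorphism from $\mathcal{E}$, placed in degree zero, to the whole complex. Applying hypercohomology and invoking its invariance under quasi-isomorphism then yields
$$\mathbb{H}^{i}(\Omega^{\bullet}_X(J^{\infty}(\mathcal{E}))) \;=\; \mathbb{H}^{i}(\mathcal{E}) \;=\; H^{i}(X,\mathcal{E}),$$
which is the desired identity.

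Given how much of the work is already absorbed into Proposition \ref{de Rham to Cech}, I do not expect any serious obstacle. The only point deserving a little care is checking that the differential on $\Omega^{\bullet}_X(J^{\infty}(\mathcal{E}))$ induced by the Grothendieck connection really matches the one obtained by applying $J^{\infty}$ termwise to the de Rham differential of $\Omega^{\bullet}_X(\mathcal{E})$, so that the claimed isomorphism of complexes is compatible with the augmentation $\mathcal{E}\hookrightarrow J^{\infty}(\mathcal{E})$. This is a local verification in \'etale coordinates $\{x_1,\dots,x_n\mid x_{n+1},\dots,x_{n+m}\}$ entirely parallel to the computation performed in the proof of Proposition \ref{de Rham to Cech}, and involves no super-specific complications.
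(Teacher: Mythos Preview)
Your proposal is correct and follows essentially the same approach as the paper: apply Proposition~\ref{de Rham to Cech} together with the invariance of hypercohomology under quasi-isomorphism. One minor simplification: in the paper the symbol $J^{\infty}(\Omega^{\bullet}_X(\mathcal{E}))$ is \emph{defined} to be the de Rham complex of the $\mathcal{D}_X$-module $J^{\infty}(\mathcal{E})$ (see the sentence preceding Proposition~\ref{de Rham to Cech}), so the identification $\Omega^{\bullet}_X(J^{\infty}(\mathcal{E}))\cong J^{\infty}(\Omega^{\bullet}_X(\mathcal{E}))$ you take care to justify is in fact a tautology of notation here rather than a lemma requiring a local check.
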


\subsection{Hodge filtration}

Let $\sigma_{ \geq n}\Omega^{\bullet}_{X}$ be the subcomplex
of $\Omega^{\bullet}_X$ defined by  $(\sigma_{ \geq n}\Omega^{\bullet}_{X})^i=(\Omega^{\bullet}_{X})^{i}$ if $i \geq n$ and $(\sigma_{\geq n}\Omega^{\bullet}_{X})^i=0$ otherwise.
The natural short exact sequence
$$0 \rightarrow \sigma_{\geq 1}\Omega^{\bullet}_{X} \rightarrow \Omega^{\bullet}_{X} \rightarrow  \mathcal{O}_X \rightarrow 0  $$  induces a long exact sequence
 \begin{equation} \label{les3.4}
 \cdots \rightarrow  H_{F}^{i}(X) \rightarrow H_{\mathrm{dR}}^{i}(X) \rightarrow H^{i} (X,\mathcal{O}_X) \rightarrow \cdots
\end{equation}
of hypercohomology groups.
Here $H_{F}^{i}(X)$ denotes the hypercohomology $\mathbb{H}^{i}(\sigma_{\geq 1}\Omega^{\bullet}_{X})$.
We have a similar long exact sequence for the even part $X_{\bbz}$ and the following commutative diagram
\begin{equation}\label{equ-surjec}
	\begin{CD}
\xymatrix{ H_{F}^{i}(X) \ar[r] \ar[d]  &
	 H_{\mathrm{dR}}^{i}(X)  \ar[r] \ar[d] & H^{i} (X,\mathcal{O}_{X}) \ar[d]\\
	 H_{F}^{i}(X_{\bbz}) \ar[r] & H_{\mathrm{dR}}^{i}(X_{\bbz}) \ar[r] & H^{i} (X_{\bbz},\mathcal{O}_{X_{\bbz}})}
	 \end{CD}
 \end{equation}
for all $i \geq 0$. Here the vertical arrows are induced by the pull-back   
$\iota^{*}: \Omega^{\bullet}_{X} \rightarrow \Omega^{\bullet}_{X_{\bbz}}$  of $\iota: X_{\bbz} \rightarrow X$. We say  that the supervariety $X$ is \textit{admissible} if the map 
$H_{\mathrm{dR}}^{i}(X) \rightarrow H^{i}(X,\mathcal{O}_X)$ is surjective for $i=1, 2$. Smooth affine supervarieties are admissible.

\begin{prop}\label{prop_admi} We consider the diagram \eqref{equ-surjec}.
\begin{itemize}	
\item[(\rmnum{1})] If the third vertical map is bijective (resp. injective) for each $i \geq 0$, then the first vertical map is bijective (resp. injective). 
\item[(\rmnum{2})] Assume that the  supervariety $X$ and its reduced part $X_{\bbz}$ are admissible. Then the kernels of horizontal arrows in the second column are $H_{F}^{i}(X)$ and $H_{F}^{i}(X_{\bbz})$ for $i=2,3$, respectively.  The first vertical map is injective for $i=2$, and the third one is surjective for $i=1,2$.
\item[(\rmnum{3})]  Assume that the supervariety $X$ is  split and admissible. Then its even reduced variety $X_{\bbz}$ is admissible.
\end{itemize}	 
\end{prop}

\begin{proof}
 (\rmnum{1}) The statement follows from the five-lemma (resp. four-lemma). (\rmnum{2}) The first  statement  is obvious. The second one follows from the first one, Theorem \ref{equi_dR_HOM} and commutativity of \eqref{equ-surjec}. (\rmnum{3}) Since X is split, the natural exact sequence $ \mathrm{Ker}(\iota^{\#}) \hookrightarrow \mathcal{O}_X \twoheadrightarrow \mathcal{O}_{\bbz}$ splits. Thus the third vertical map is surjective and (\rmnum{3}) follows from commutativity of \eqref{equ-surjec}.
\end{proof}

\section{Harish-Chandra torsors  and their extensions} 
\subsection{Harish-Chandra torsors}
Let $G$ be an algebraic supergroup and  $\mathcal{M}$ be a superscheme over $X$ with a morphism $\rho: \mathcal{M} \rightarrow X$.   
We say that $\mathcal{M}$  is a $G$-torsor if there is  a free action of the supergroup $G$ on $\mathcal{M}$  and  the action map 
$$G(T) \times \mathcal{M}(T) \rightarrow \mathcal{M}(T) \times_{X(T)}\mathcal{M}(T): (g,x) \mapsto (x,g\cdot x), \text{ $T \in \mathrm{Al}_{\mathbf{k}}$} $$ 
induces an isomorphism of superschemes over $X$.  A $G$-torsor is also called a principal $G$-bundle. In this paper,   we assume that all torsors are locally trivial in the Zariski topology.

Let $V$ be a $G$-module and $\mathcal{M}$ be a $G$-torsor  over $X$. The 
\textit{localization} (or \textit{associated bundle}) $\mathcal{V}=\mathrm{Loc}(\mathcal{M},V)$ of $V$ is a locally free sheaf over $X$ defined as follows. For an open sub-superscheme  $U \subset X$, the section $\Gamma(U,\mathcal{V})$  takes the $\mathcal{O}_U$-module of $G$-equivariant morphisms from $\rho^{-1}(U)$ to $V$. 
If there exists  a local trivialization $ \rho^{-1}(U)=U \times G \rightarrow U$, then  $\Gamma(U,\mathcal{V})=\mathbf{k}[U]\otimes_{\mathbf{k}} V$.  Given a supergroup homomorphism $f: G \rightarrow G_1$, we have a $G$-action on $G_1$ by 
$$  G(T)  \times G_1(T)  \rightarrow  G_1(T): (g, x) \mapsto  f(g)x \quad $$
for all $T \in \mathrm{Al}_{\mathbf{k}}, g \in G(T)$ and $ x \in G_1(T)$.  Note that the diagonal 
$G$-action on $\mathcal{M} \times G_1$ is free. It is easy to see that the GIT quotient $f_{*}(\mathcal{M}):=\mathcal{M} \times G_1 \sslash G$ is a $G_1$-torsor over $X$.  For a $G_1$-module $V$, we have  $\mathrm{Loc}(f_{*}(\mathcal{M}),V)=\mathrm{Loc}(\mathcal{M},f^{*}V)$.

Let $ \mathfrak{h}$ be a Lie superalgebra  that  contains $\Lie(G)$.  We say that
$\langle G, \mathfrak{h}  \rangle$ is a \textit{Harish-Chandra pair} if  $G$ acts on  $\mathfrak{h}$ in such a way that its differential action coincides with the adjoint action of $\mathrm{Lie}(G)$ on $\mathfrak{h}$. Denote by 
$$0 \rightarrow \xymatrix{ \mathfrak{g}_{\mathcal{M}} \ar[r]^{\iota_{\mathcal{M}}}& \mathcal{E}_{\mathcal{M}}} \rightarrow \mathrm{T}_X \rightarrow 0  $$ 
 the Atiyah extension on $X$. Here $\mathfrak{g}_{\mathcal{M}}$ is the associated bundle of the $G$-module $\mathfrak{g}=\mathrm{Lie}(G)$. A bundle map 
$\theta_{\mathcal{M}}: \mathcal{E}_{\mathcal{M}} \rightarrow \mathfrak{h}_{\mathcal{M}}$
is called $G$-equivariant connection if $\theta_{\mathcal{M}}\circ\iota_{\mathcal{M}}$ coincides with the embedding $\mathfrak{g}_{\mathcal{M}} \hookrightarrow \mathfrak{h}_{\mathcal{M}}$. The  connection $\theta_{\mathcal{M}}$ is called  \textit{flat} if the $\mathfrak{h}$-valued 1-form $ \alpha=\rho^{*}(\theta_{\mathcal{M}})$ on $\mathcal{M}$ satisfies $2\mathrm{d}\alpha + \alpha\wedge\alpha=0$. By a \textit{Harish-Chandra  $\langle G, \mathfrak{h} \rangle$-torsor} $\mathcal{M}$, we refer to a $G$-torsor $\mathcal{M}$  equipped with a flat connection $\theta_{\mathcal{M}}$. A Harish-Chandra  $\langle G, \mathfrak{h} \rangle$-torsor $\mathcal{M}$ is called \textit{transitive}, if $\theta_{\mathcal{M}}$ is a bundle isomorphism. 
  
Let  $V$ be an $\mathfrak{h}$-module with a $G$-action. We say that $V$ is a \textit{Harish-Chandra} $\langle G, \mathfrak{h}  \rangle$-\textit{module} if the differential  of the $G$-action coincides with the restriction of $V$ on  $\mathfrak{g} \subset \mathfrak{h}$. For simplicity,
by a $\langle G, \mathfrak{h}  \rangle$-module (resp. torsor), we always mean a Harish-chandra $\langle G, \mathfrak{h}  \rangle$-module (resp. torsor).

Let $(\mathcal{M},\theta_{\mathcal{M}})$ be a  $\langle G, \mathfrak{h} \rangle$-torsor and $V$ be $\langle G, \mathfrak{h}  \rangle$-module. For a section $s=fv \in \Gamma(U,\mathcal{V})$ 
with $ f \in \mathcal{O}_{\mathcal{M}}(\rho^{-1}(U))$ and $v \in V$, $\xi \in 
\Gamma(U,\mathcal{E}_{\mathcal{M}})$, set 
$$
\nabla^{\mathcal{M}}_{\xi}(fv)=\xi\cdot fv+(-1)^{|f||\xi|}f \theta_{\mathcal{M}}(\xi)v.
$$
It is clear that  $\nabla^{\mathcal{M}}_{\xi}s=0$ for all $s\in \Mor_{G}(\rho^{-1}(U), V)$ and $ \xi \in \Gamma(U, \iota_{\mathcal{M}}(\mathfrak{g}_{\mathcal{M}}))$. Hence $\nabla^{\mathcal{M}}$  induces  a flat connection on  $\mathcal{V}$, which is denoted by  $\nabla$.

Let $\mathrm{Loc}(\mathcal{M},\text{-})$ be the localization functor which sends each $\langle G, \mathfrak{h} \rangle$-module $V$ to the vector bundle $\mathcal{V}=\mathrm{Loc}(\mathcal{M},V)$.  By Proposition \ref{extension of D-mods} and the fact $\mathrm{Ext}_{\langle G, \mathfrak{h} \rangle}^{i}(\mathbf{k}, V)=H^{i}(\langle G, \mathfrak{h} \rangle,V)$,  we have a morphism 

$$\mathrm{Loc}(\mathcal{M},\text{-}):  H^{i}(\langle G, \mathfrak{h} \rangle,V) \longrightarrow H_{\mathrm{dR}}^{i}(X,\mathcal{V}).$$  
Let  
\begin{equation}\label{ses}
 0 \rightarrow U \rightarrow V \rightarrow W \rightarrow 0 
\end{equation} 
be  a short exact sequence of $\langle G, \mathfrak{h} \rangle$-modules.  Their localizations $\mathcal{U}$, $\mathcal{V}$ and $\mathcal{W}$ form a short exact sequence.  We have the associated  long exact sequence
\begin{equation}\label{les}
\xymatrix{ H_{\mathrm{dR}}^{i-1}(X,\mathcal{W}) \ar[r]^{d_{i}} & H_{\mathrm{dR}}^{i}(X,\mathcal{U}) \ar[r]^{a_{i}} & H_{\mathrm{dR}}^{i}(X, \mathcal{V}) \ar[r]^{b_{i}} &  H_{\mathrm{dR}}^{i}(X,\mathcal{W})}	
\end{equation}	 
of de Rham cohomology groups for all $i>0$.

\subsection{Recall: groupoid $\mathcal{S}pl(c)$} \label{sec4.2}
In this subsection, we recall the construction and some basic properties of $\mathcal{S}pl(c)$ 
given in [\S5, \cite{BK}].

Let $A,B$ be two objects in an  abelian category $\mathscr{C}$. Denote by $\mathcal{E}xt^{1}(B,A)$ the extension groupoid.  Fix $c \in \mathrm{Ext}^{2}(B,A)$  and its Yoneda presentation
\begin{equation}\label{equ_yoneda}
\begin{tikzcd}[cramped, sep=small] 
	0 \arrow[r] & A \arrow[r] & E_1 \arrow[r] & E_2 \arrow[r] & B \arrow[r] & 0.
\end{tikzcd}
\end{equation}
Let $\mathcal{S}pl(c)$ stand for the groupoid consist of the exact sequences in the form of 
$$\begin{tikzcd}[cramped, sep=small]
A  \arrow[r,  "f" ] & E \arrow[r,"g"]  & B
\end{tikzcd}$$ 
such that $ E_1 \cong \Ker g $, $E_2 \cong \mathrm{Coker} f $ and  the natural sequence
\begin{equation*} 	
\begin{tikzcd}[cramped, sep=small] 
	0 \arrow[r] & A \arrow[r] &  \Ker g \arrow[r] &  \mathrm{Coker} f \arrow[r] & B \arrow[r] & 0
\end{tikzcd}
\end{equation*}
is isomorphic to \ref{equ_yoneda}. The groupoid $\mathcal{S}pl(c)$ is nontrivial if and only if $ c \in \mathrm{Ext}^{2}(B,A)$ is trivial. There are a pair of sum and difference operations
$$ \tilde{+} :\mathcal{E}xt^{1}(B,A) \times \mathcal{S}pl(c) \rightarrow \mathcal{S}pl(c); \quad  \tilde{-}:  \mathcal{S}pl(c) \times \mathcal{S}pl(c) \rightarrow \mathcal{E}xt^{1}(B,A) $$
given by the Baer sum.  The sum $\tilde{+}$ defines a transitive $\mathcal{E}xt^{1}(B,A)$-action on 
$\mathcal{S}pl(c)$.

Let 
$$\begin{tikzcd}[cramped, sep=small] 
0 \arrow[r] & A_0 \arrow[r,"a"] & A  \arrow[r,"b"] & A_1  \arrow[r] & 0 
\end{tikzcd}$$
be a short exact sequence in 
$\mathscr{C}$. For a fixed $B \in \mathscr{C}$,  we have the exact sequence 

\begin{equation} \label{equ_4.4} 
\begin{tikzcd}[cramped, sep=small] 
	\Ext^{i-1}(B,A_1)  \arrow[r,"\tilde{d}_i"] &  \Ext^{i}(B,A_0) \arrow[r,"\tilde{a}_i"] &  \Ext^{i}(B,A) \arrow[r,"\tilde{b}_i"] &  \Ext^{i}(B,A_1)
\end{tikzcd}	
\end{equation}
for all $i \geq 1$.  Assume that $\tilde{b}_2(c)=0$. Each  $s \in \mathcal{S}pl(\tilde{b}_2(c))$ canonically defines a class $c_{0} \in \Ext^{2}(B,A_0)$ such that $\tilde{a}_{2}(c_0)=c$. For any $e \in \Ext^{1}(B,A_1)$,  the twist $ e\tilde{+}s \in \mathcal{S}pl(b_2(c))$, by construction, corresponds to $c_0+\tilde{d}_1(e)$.

\subsection{Extensions of Harish-Chandra torsors }
Let $V$ be a  $ \langle G, \mathfrak{h} \rangle$-module and 
\begin{equation}\label{extension}
\begin{tikzcd}[cramped, sep=small] 
1 \arrow[r]& V \arrow[r,"\sigma"]& \langle G_1,\mathfrak{h}_1\rangle \arrow[r,"\pi"]&  \langle G, \mathfrak{h} \rangle \rightarrow 1
\end{tikzcd}
\end{equation}
be an  extension of  $\langle G, \mathfrak{h} \rangle$ 
by $V$. For a transitive $\langle G, \mathfrak{h} \rangle$-torsor $\mathcal{M}$ over $X$, denote by  $H_{\mathcal{M}}^{1}(X,\langle G_1, \mathfrak{h}_1) \rangle$  the set of isomorphism classes 
of $\langle G_1, \mathfrak{h}_1 \rangle$-torsor $\mathcal{M}_1$  with $ \pi^{*}(\mathcal{M})=\mathcal{M}_1$. Note that the torsors in $H_{\mathcal{M}}^{1}(X,\langle G_1, \mathfrak{h}_1) \rangle$ are automatically transitive.  The goal of this  subsection is to describe $H_{\mathcal{M}}^{1}(X,\langle G_1, \mathfrak{h}_1) \rangle$.

 There  is a super version of Hochschild-Serre spectral sequences \cite{Mu} 
$$ E_{pq}^{2}=H^{p}(\langle G, \mathfrak{h} \rangle,H^{q}(V,V)) \Rightarrow  H^{p+q}(\langle G_1,\mathfrak{h}_1\rangle,V),$$ 
where the first $V$ in $H^{q}(V,V)$ denotes the Harish-Chandra pair $\langle V,V  \rangle$. 
The following is a piece of its lower degree
$$\begin{tikzcd}[cramped, sep=small]
H^{1}(V,V)^{\langle G, \mathfrak{h} \rangle} \arrow[r,"d"] & H^{2}(\langle G, \mathfrak{h} \rangle,V) \arrow[r,"\pi^*"] & H^{2}(\langle G_1,\mathfrak{h}_1\rangle,V). 
\end{tikzcd}$$
Let  $\tau_{V} \in H^{1}(V,V)$ be the  tautological class. Then 
$$c=d(\tau_V) \in H^2(\langle G, \mathfrak{h} \rangle,V)=\Ext_{\langle G, \mathfrak{h} \rangle}^2(\mathbf{k},V)$$ is the  class representing the extension
\eqref{extension}. The pull-back $\pi^*(c) \in \Ext_{\langle G_1, \mathfrak{h}_1 \rangle}^2(\mathbf{k},V)$ is trivial. The splittings of $\pi^*(c)$ for a fixed Yoneda representation form a  groupoid  $\mathcal{S}pl(\pi^*(c))$  over  $\Ext_{\langle G_1, \mathfrak{h}_1 \rangle}^1(\mathbf{k},V)$. We choose a good splitting $s \in \mathcal{S}pl(\pi^*(c))$ such that $\sigma^{*}(s)$ is the extension  presented by $\tau_{V}$. 

 Let $c(\mathcal{M}):=\mathrm{Loc}(\mathcal{M}, c)\in \Ext_{\mathcal{D}_X}^2(\mathcal{O}_X,\mathcal{V})$. For $ \mathcal{M}_0 \in H^1_{\mathcal{M}}(X, \langle G_1, \mathfrak{h}_1 \rangle)$,  we have $\mathrm{Loc}(\mathcal{M}_0, \pi^*(c))=c(\mathcal{M})$ and 
$\mathrm{Loc}(\mathcal{M}_0, s) \in \mathcal{S}pl(c(\mathcal{M}))$. 
This implies that if $H^1_{\mathcal{M}}(X, \langle G_1, \mathfrak{h}_1 \rangle)$ is non-empty then  $c(\mathcal{M})$ vanishes.
Set
$$\mathrm{Lin}: H^1_{\mathcal{M}}(X, \langle G_1, \mathfrak{h}_1 \rangle ) \longrightarrow    \mathcal{S}pl(c(\mathcal{M})): \mathcal{M}_0 \mapsto \mathrm{Loc}(\mathcal{M}_0, s).$$
The inverse of $\mathrm{Lin}$ is constructed as follows.   
For a splitting $s_X \in \mathcal{S}pl(c(\mathcal{M}))$,  the pull-back $\rho^{*}(s_X)$ is a
splitting of $\rho^{*}c(\mathcal{M})$. 
It has another splitting as follows. Let 
\begin{equation}\label{equ-obstruction}
0 \rightarrow V \rightarrow E_1 \rightarrow  E_2  \rightarrow \mathbf{k} \rightarrow 0
\end{equation}
 be a  Yoneda representation 
of $c$ and  $s$ be  the good splitting of $\pi^{*}(c)$ as above.
Denote by $p$ 
 the projection $\mathcal{M} \rightarrow \mathrm{pt}$.
 The pull-back of \eqref{equ-obstruction} by $p$ is a Yoneda representation of $\rho^{*}c(\mathcal{M})$,  which has the splitting $p^{*}(s)$. The set of isomorphisms between $\rho^{*}(s_X)$ and $p^*(s)$  forms  a groupoid $\mathcal{E}xt^{1}(\mathbf{k},V)=\mathrm{Hom}(\mathbf{k},V)$.
Let $ \mathcal{M}_s$ be the functor that sends each   $ T \in \mathrm{Al}_{\mathrm{k}} $ to the set of pairs $(m,\varphi)$, where $m \in \mathcal{M}(T)$ and  $\varphi \in \mathcal{E}xt^1(\mathbf{k},V)(T)$. 
We can check  that $ \mathcal{M}_s$ forms a $V$-torsor over $\mathcal{M}$ and hence a $\langle G_1, \mathfrak{h}_1 \rangle$-torsor over $X$. The functor given by $$s_X \mapsto  \mathcal{M}_s \in  H^1_{\mathcal{M}}(X, \langle G_1, \mathfrak{h}_1 \rangle )$$  is the inverse $\mathrm{Lin}^{-1}$ of $\mathrm{Lin}$. This shows that if $c(\mathcal{M})$ vanishes then  $H^1_{\mathcal{M}}(X, \langle G_1, \mathfrak{h}_1 \rangle )$ is non-empty. The above discussion can be summarized as follows.   

\begin{prop} \label{thm existence} 
There exists $ c \in H^2(\langle G, \mathfrak{h} \rangle, V) $ such that $ H^1_{\mathcal{M}}(X, \langle G_1, \mathfrak{h}_1 \rangle )$ is non-empty if and only if $\mathrm{Loc}(\mathcal{M}, c) \in H_{\mathrm{dR}}^2(X,\mathcal{V})$ is trivial. In this case  $H^1_{\mathcal{M}}(X, \langle G_1, \mathfrak{h}_1 \rangle )$ is a 
$H_{\mathrm{dR}}^1(X,\mathcal{V})$-torsor.
\end{prop}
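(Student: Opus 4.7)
The plan is to exhibit a natural bijection between the set $H^{1}_{\mathcal{M}}(X,\langle G_1,\mathfrak{h}_1\rangle)$ of lifts and the groupoid $\mathcal{S}pl(c(\mathcal{M}))$ of splittings, where $c \in H^{2}(\langle G,\mathfrak{h}\rangle, V)$ is the class classifying the extension \eqref{extension}. Concretely, the super Hochschild--Serre spectral sequence associated with \eqref{extension} produces $c = d(\tau_{V})$, whose pullback $\pi^{*}c$ vanishes in $H^{2}(\langle G_1,\mathfrak{h}_1\rangle, V)$ by construction. Once the bijection is in place, the proposition follows from two inputs already established: Proposition \ref{extension of D-mods} identifies $\mathrm{Ext}^{i}_{\mathcal{D}_{X}}(\mathcal{O}_{X},\mathcal{V})$ with $H^{i}_{\mathrm{dR}}(X,\mathcal{V})$, and the formalism of \S\ref{sec4.2} says that $\mathcal{S}pl(c(\mathcal{M}))$ is non-empty iff $c(\mathcal{M})=0$ in $\mathrm{Ext}^{2}_{\mathcal{D}_{X}}(\mathcal{O}_{X},\mathcal{V})$, and in that case is a torsor under $\mathrm{Ext}^{1}_{\mathcal{D}_{X}}(\mathcal{O}_{X},\mathcal{V})$.

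For the forward map, I would fix a Yoneda representative \eqref{equ-obstruction} of $c$ together with a good splitting $s \in \mathcal{S}pl(\pi^{*}c)$ normalized so that $\sigma^{*}s$ is the tautological extension, then set $\mathrm{Lin}(\mathcal{M}_{0}) := \mathrm{Loc}(\mathcal{M}_{0},s)$. Functoriality of localization together with $\pi_{*}(\mathcal{M}_{0}) = \mathcal{M}$ yields $\mathrm{Loc}(\mathcal{M}_{0},\pi^{*}c) = c(\mathcal{M})$, so the output lies in $\mathcal{S}pl(c(\mathcal{M}))$. For the inverse, given $s_{X} \in \mathcal{S}pl(c(\mathcal{M}))$, I form the functor $\mathcal{M}_{s}$ whose $T$-points are pairs $(m,\varphi)$ with $m \in \mathcal{M}(T)$ and $\varphi$ an isomorphism between $\rho^{*}s_{X}$ and $p^{*}s$; since the automorphism groupoid of a splitting of $\rho^{*}c(\mathcal{M})$ is $\mathrm{Hom}(\mathbf{k},V) = V$, the projection $\mathcal{M}_{s} \to \mathcal{M}$ is a $V$-torsor, and endowing it with the $G_{1}$-action and flat $\mathfrak{h}_{1}$-valued connection obtained by combining $\theta_{\mathcal{M}}$ with the additional data encoded in $s$ exhibits $\mathcal{M}_{s}$ as an object of $H^{1}_{\mathcal{M}}(X,\langle G_{1},\mathfrak{h}_{1}\rangle)$. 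It then remains to verify that these two assignments are mutually inverse and that $\mathrm{Lin}$ is equivariant with respect to the Baer sum on both sides.

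The hardest step, I expect, is checking that the extended connection $\theta_{\mathcal{M}_{s}}$ assembled above actually satisfies the Maurer--Cartan equation $2\mathrm{d}\alpha + \alpha\wedge\alpha = 0$. This forces me to unwind what \emph{good splitting} means at the level of Chevalley--Eilenberg cocycles: $s$ should encode precisely the difference between a cocycle representing $c$ at the Harish-Chandra level and its pullback coboundary on $\langle G_{1},\mathfrak{h}_{1}\rangle$, so that the resulting $\mathfrak{h}_{1}$-valued 1-form on $\mathcal{M}_{s}$ is flat by a combination of flatness of $\theta_{\mathcal{M}}$ and the cocycle identity baked into $s$. Once flatness is in place, transitivity is automatic because the fiber direction $V$ of $\mathcal{M}_{s} \to \mathcal{M}$ matches $\mathfrak{h}_{1}/\mathfrak{h} \cong V$, and the equivariance of $\mathrm{Lin}$ under $\tilde{+}$ reduces to a direct comparison: twisting $s_{X}$ by $e \in \mathrm{Ext}^{1}_{\mathcal{D}_{X}}(\mathcal{O}_{X},\mathcal{V})$ through Baer sum alters the gluing datum $\varphi$ by the cocycle representing $e$, which is exactly the $V$-torsor twist corresponding to the class of $e$ in $H^{1}_{\mathrm{dR}}(X,\mathcal{V})$ under Proposition \ref{extension of D-mods}.
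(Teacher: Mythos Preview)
Your proposal is correct and follows essentially the same route as the paper: the paper's argument, given in the paragraphs immediately preceding the proposition, constructs the class $c=d(\tau_V)$ via Hochschild--Serre, defines $\mathrm{Lin}(\mathcal{M}_0)=\mathrm{Loc}(\mathcal{M}_0,s)$ for a good splitting $s$, and builds the inverse from pairs $(m,\varphi)$ exactly as you describe. If anything you are more explicit than the paper, which disposes of the flatness verification for $\mathcal{M}_s$ with the single phrase ``We can check that $\mathcal{M}_s$ forms a $V$-torsor over $\mathcal{M}$ and hence a $\langle G_1,\mathfrak{h}_1\rangle$-torsor over $X$'' and does not spell out the equivariance of $\mathrm{Lin}$ under Baer sum.
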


 Let $U,V,W$ be a triple of $\langle G, \mathfrak{h} \rangle$-modules such that it  forms an  exact sequence 
\eqref{ses}.  Let  $\langle G_1, \mathfrak{h}_1 \rangle$ be an extension of $\langle G, \mathfrak{h} \rangle$ by $V$ with the class $ c \in H^2(\langle G, \mathfrak{h} \rangle,V)$.  Then  $\langle G_0, \mathfrak{h}_0 \rangle=\langle G_1, \mathfrak{h}_1 \rangle/U$ is an extension of $\langle G, \mathfrak{h} \rangle$ by $W$. We denote by $c_0$ its cohomology class. Assume that  a $\langle G, \mathfrak{h} \rangle$-torsor $\mathcal{M}$ over $X$  can not be lifted  to a $\langle G_1, \mathfrak{h}_1 \rangle$-torsor, but it  can be lifted to a $\langle G_0, \mathfrak{h}_0 \rangle$-torsor.

\begin{prop} \label{thm equivariance}
The map 
$$ H^1_{\mathcal{M}}(X, \langle G_0, \mathfrak{h}_0 \rangle ) \rightarrow H_{\mathrm{dR}}^2(X,\mathcal{U}): \mathcal{M} \mapsto \mathrm{Loc}(\mathcal{M},c_0) $$ is compatible with the  
$H_{\mathrm{dR}}^1(X,\mathcal{W})$-actions on  the two sides.
\end{prop}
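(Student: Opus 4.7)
The strategy is to translate the abstract Baer sum compatibility of Section \ref{sec4.2} into de Rham cohomology via the Lin identification of Proposition \ref{thm existence}. Applying the exact localization functor $\mathrm{Loc}(\mathcal{M},\cdot)$ to the short exact sequence $0\to U\to V\to W\to 0$ yields the sequence $0\to \mathcal{U}\to \mathcal{V}\to \mathcal{W}\to 0$ of $\mathcal{D}_X$-modules, whose associated long exact sequence \eqref{les} carries the connecting map $\tilde{d}_1=d_2\colon H^1_{\mathrm{dR}}(X,\mathcal{W})\to H^2_{\mathrm{dR}}(X,\mathcal{U})$. By construction, the $H^1_{\mathrm{dR}}(X,\mathcal{W})$-action on the target is translation along $d_2$, so the assertion amounts to the identity $\mathrm{Loc}(\mathcal{M}_0\,\tilde{+}\,\eta,\,c_0) = \mathrm{Loc}(\mathcal{M}_0,\,c_0) + d_2(\eta)$ for $\eta\in H^1_{\mathrm{dR}}(X,\mathcal{W})$ and $\mathcal{M}_0\in H^1_{\mathcal{M}}(X,\langle G_0,\mathfrak{h}_0\rangle)$.

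First I would invoke Proposition \ref{thm existence} for the extension $1\to W\to \langle G_0,\mathfrak{h}_0\rangle\to\langle G,\mathfrak{h}\rangle\to 1$ to identify $H^1_{\mathcal{M}}(X,\langle G_0,\mathfrak{h}_0\rangle)$ with the groupoid $\mathcal{S}pl(\mathrm{Loc}(\mathcal{M},c_0))$ via the Lin bijection, transporting the torsor action to the Baer sum $\tilde{+}$. Writing $c_0=\tilde{b}_2(c)$ for the image of $c$ under $H^2(\langle G,\mathfrak{h}\rangle,V)\to H^2(\langle G,\mathfrak{h}\rangle,W)$, the hypothesis that $\mathcal{M}$ lifts to a $\langle G_0,\mathfrak{h}_0\rangle$-torsor is exactly the vanishing $\tilde{b}_2(\mathrm{Loc}(\mathcal{M},c))=0$. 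This places us in the setup of Section \ref{sec4.2} applied to the class $\mathrm{Loc}(\mathcal{M},c)\in\mathrm{Ext}^2_{\mathcal{D}_X}(\mathcal{O}_X,\mathcal{V})$: each splitting $s\in\mathcal{S}pl(\mathrm{Loc}(\mathcal{M},c_0))$ canonically determines a class $\gamma(s)\in H^2_{\mathrm{dR}}(X,\mathcal{U})$ with $\tilde{a}_2(\gamma(s))=\mathrm{Loc}(\mathcal{M},c)$ and satisfying $\gamma(e\,\tilde{+}\,s)=\gamma(s)+\tilde{d}_1(e)$.

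The central step is then to match $\gamma\circ\mathrm{Lin}$ with the obstruction map of the proposition. Here I read the symbol $c_0$ in the formula $\mathrm{Loc}(\mathcal{M}_0,c_0)$ as the class in $H^2(\langle G_0,\mathfrak{h}_0\rangle,U)$ of the extension $1\to U\to \langle G_1,\mathfrak{h}_1\rangle\to\langle G_0,\mathfrak{h}_0\rangle\to 1$. The comparison is a diagram chase on Yoneda representatives: fix a Yoneda representative $0\to V\to E_1\to E_2\to\mathbf{k}\to 0$ of $c$, pass to the quotient by $U$ to obtain a Yoneda representative of $\tilde{b}_2(c)$, and localize everything via $\mathrm{Loc}(\mathcal{M},\cdot)$. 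The splitting $\mathrm{Lin}(\mathcal{M}_0)$ of $\mathrm{Loc}(\mathcal{M},c_0)$ then reconstructs, via the canonical procedure of Section \ref{sec4.2}, a Yoneda $2$-extension representing $\gamma(\mathrm{Lin}(\mathcal{M}_0))\in H^2_{\mathrm{dR}}(X,\mathcal{U})$, and I would verify that this representative agrees with the localization under $\mathcal{M}_0$ of the abstract class in $H^2(\langle G_0,\mathfrak{h}_0\rangle,U)$, both being assembled from the same datum, namely the lift $\mathcal{M}_0$ together with the kernel $U$.

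The main obstacle is precisely this identification of two a priori distinct elements of $H^2_{\mathrm{dR}}(X,\mathcal{U})$ mapping to $\mathrm{Loc}(\mathcal{M},c)$ under $\tilde{a}_2$: one obtained by localizing the abstract Yoneda class using $\mathcal{M}_0$, the other by the Yoneda-splitting procedure of Section \ref{sec4.2}. Once this coincidence is established using functoriality of $\mathrm{Loc}$ and of the Yoneda push-out/splitting constructions, the asserted equivariance follows immediately from the Baer sum formula applied to $s=\mathrm{Lin}(\mathcal{M}_0)$.
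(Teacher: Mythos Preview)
Your proposal is correct and follows essentially the same route as the paper: both arguments reduce the equivariance to the Baer-sum formula of Section~\ref{sec4.2}, namely that a splitting $s$ of $\tilde b_2(c)$ determines a $U$-valued class and that twisting $s$ by $e$ shifts this class by $\tilde d_1(e)$, combined with the $\mathrm{Lin}$ identification of lifts with splittings. The only organizational difference is that the paper states the Baer-sum formula first at the level of the Harish-Chandra pair $\langle G_0,\mathfrak h_0\rangle$ and then localizes, whereas you apply Section~\ref{sec4.2} directly in the category of $\mathcal D_X$-modules to the class $\mathrm{Loc}(\mathcal M,c)$; what you isolate as the ``central step'' (that $\gamma\circ\mathrm{Lin}$ agrees with $\mathcal M_0\mapsto\mathrm{Loc}(\mathcal M_0,c_0)$) is exactly what the paper dispatches with the phrase ``by construction.''
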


\begin{proof}
The $H_{\mathrm{dR}}^1(X,\mathcal{W})$-action on the right-hand side is given by the homomorphism $ d_1: H_{\mathrm{dR}}^1(X,\mathcal{W}) \rightarrow H_{\mathrm{dR}}^2(X,\mathcal{U})$ in the long exact sequence \eqref{les}.
The  following explicit description of 	the $H_{\mathrm{dR}}^1(X,\mathcal{W})$-action on the left-hand side
completes the proof. Suppose that $s \in \mathcal{S}pl(b(c))$ determines  the class $c_0 \in H^{2}(\langle G_0,\mathfrak{h}_0 \rangle, U)$ in the way in \S \ref{sec4.2}.  Now we take $B=\mathbf{k}$ and $A=V$ and consider the long exact sequence \eqref{equ_4.4}. For $ \phi \in \mathrm{Ext}^{1}_{\langle G_0,\mathfrak{h}_0 \rangle}(\mathbf{k},W)$, according to the construction in the last  paragraph of \S \ref{sec4.2}, the twist $\phi \tilde{+}s$ gives us the class $c_0+\tilde{d}_1(\phi)$. Denote by $(\phi \tilde{+} s)_X,(c_0)_X$  and $(\tilde{d}_1(\phi))_X$ the localization of $\phi\tilde{+}s,c_0$  and $\tilde{d}_1(\phi)$ by the torsor $\mathcal{M}$, respectively. By definition of $\mathrm{Lin}^{-1}$, we have 
$$\mathrm{Loc}(\mathrm{Lin}^{-1}((\phi \tilde{+} s)_X),(c_0)_X)=(c_0)_X+(d_1(\phi))_X.$$ 
\end{proof}

\section{Quantization by Harish-Chandra torsors}

Let $ X$ be a superscheme over $\mathbf{k}$  equipped  with  an even Poisson bracket $\{ ,  \}$. 
Recall that $\mathcal{O}_{X,\hbar}$ denotes $\mathcal{O}_{X}[[\hbar]]$ in our convention.
A \textit{deformation quantization} of  $X$ is a sheaf 
$(\mathcal{O}_{X,\hbar},\ast )$ of flat associative  $\mathbf{k}_{\hbar}$-superalgebras such that
\begin{itemize}
	\item[(\rmnum{1})] for any open $U \subset X_{\bbz}$ and $a,b \in \Gamma(U,\mathcal{O}_X)$,  
	$$ [a,b]:= a\ast b-(-1)^{|a||b|}b\ast a \equiv \{a,b\}\hbar \quad (\mathrm{mod} \hbar^{2});$$
	\item[(\rmnum{2})]  the map  $\mathcal{O}_{X,\hbar} \twoheadrightarrow  \mathcal{O}_{X}$  given by specializing $\hbar=0$ is  a  $\mathbf{k}$-superalgebra homomorphism.
\end{itemize}
By a deformation quantization of the symplectic supervariety $(X,\omega)$ we mean that of its associated Poisson supervariety. 
Denote by  $Q(X, \omega)$ the set of   equivalence classes of deformation quantizations up to $\mathbf{k}_{\hbar}$-superalgebra isomorphisms.

 \subsection{ Algebraic objects}
Let $A$ be the complete algebra  $$\mathbf{k}[[x_1,\ldots,x_n, y_1,\ldots,y_n; s_1, \ldots s_m]]$$ 
of formal power series.  It can be seen as the structure sheaf of the formal polydisk $\hat{\mathbf{k}}^{(2n|m)}$ of super dimension 
$(2n|m)$, which is obtained by completing $\mathbf{k}^{(2n|m)}$ at $0$.  Equip $A$ with the standard Poisson bracket
\begin{align}\label{equ_5.1}
\begin{CD}
\{x_i, y_j \}=\delta_{i,j}, &\{s_i, s_j \}=\delta_{i,j},  \\
\{ x_i, s_j\}=\{x_i, x_j\}=&\{ y_i , s_j\}=\{y_i,y_j\}=0.	 
\end{CD}
\end{align}
The super formal Darboux theorem and its quantum version (\cite{SX}) say that  
$A$ is the unique Poisson superalgebra arising from a symplectic form on $\hat{\mathbf{k}}^{2m|n}$ up to  isomorphisms, and $A$ has the quantization $D=A_{\hbar}$  such that  
 \begin{align*}
 [x_i,y_j]=[s_i, & s_j]=\hbar\delta_{i,j},    \\
 [x_i, x_j]=[y_i, y_j]=&[x_i, s_j]=[y_i, s_j]=0,
\end{align*}
which is also unique up to $\mathbf{k}_{\hbar}$-superalgebra isomorphisms. 

We list the main algebraic objects used in this paper.
(\rmnum{1})  $W=\mathrm{Der}A$, the Lie superalgebra of derivations of $A$; 
(\rmnum{2})   $\mathrm{Aut}A$,  the  supergroup of automorphisms of $A$;
(\rmnum{3}) $W_0$, the  vector field preserving the maximal ideal $\mathfrak{m}$ corresponding to the closed point  $0 \in \hat{\mathbf{k}}^{(2n|m)}$; 
(\rmnum{4})  $\mathrm{Sym}A \subset  \mathrm{Aut}A$,  the supergroup of symplectic  automorphisms of $A$;
(\rmnum{5}) $H \subset W$, the Hamiltonian vector fields; (\rmnum{6}) $\mathrm{Aut}D$, the automorphisms of $D$; (\rmnum{7}) $\mathrm{Der}D$ the $\mathbf{k}$-derivations of $D$; (\rmnum{8}) the Lie superalgebra $G=h^{-1}D$; (\rmnum{9}) $(\mathrm{Der}D)_p$, the quotient of $\mathrm{Der}D$ by the ideal consist of  the derivations $\xi \in \mathrm{Der}D$  with  $\xi \cdot D \subset \hbar^{p}D $; (\rmnum{10})   $(\mathrm{Aut}D)_p$, the quotient of $\mathrm{Aut}D$ by the  normal sub-supergroup consist of  the automorphisms $f \in \mathrm{Aut}D$  with  $f \cdot D \subset \hbar^{p}D $; (\rmnum{11}) $G_{p}=G/\hbar^{p+1}G$.
  
As in the non-super case \cite{BK}, we have the following commutative diagram 
\begin{equation}\label{commu-diagram}
	\begin{CD}
		\xymatrix{ & 0 \ar[d] & 0 \ar[d] & 0  \ar[d] & \\
			0 \ar[r] & \hbar^p\cdot\mathbf{k}\ar[r]\ar[d]^{\hbar}  & \hbar^p\cdot A \ar[r]\ar[d] &  \hbar^p\cdot H\ar[r]\ar[d] & 0 \\
			0 \ar[r] & \mathbf{k}[\hbar]/\hbar^{p+2}\ar[r]\ar[d] & G_{p+1}\ar[r]\ar[d] & (\Der D)_{p+1} \ar[r]\ar[d] & 0  \\
			0 \ar[r] &  \mathbf{k}[\hbar]/\hbar^{p+1}\ar[r]\ar[d] & G_{p}\ar[r]\ar[d] &  (\Der D)_{p} \ar[r]\ar[d]  & 0 \\
			& 0  &  0 & 0  & }
	\end{CD}
\end{equation}		
 of homomorphisms of Lie superalgebras.

We also need  a series of Harish-Chandra pairs defined as follows.
Let $V$ be the super vector space spanned by $\{ x_1,\ldots,x_n;y_1,\ldots,y_n \mid s_1, \ldots s_m \}$ and $\mathrm{OSP}(2n|m)$ be the Ortho-symplectic supergroup preserving the super skew-symmetric two form on $V$ given by \eqref{equ_5.1}. For any $T \in \mathrm{Al}_{\mathbf{k}}$, there is a  projection 
$ \phi_P: (\mathrm{Aut} D)_{p}(T) \twoheadrightarrow  \mathrm{OSP}(2n|m) (T)$  determined by
$$ f(x)\equiv \phi(f)(x) \mod (\mathfrak{m}^2(T) +D(T)\hbar)$$
for all $ f \in (\mathrm{Aut} D)_{p}(T)$ and $ x \in V(T)$. We have a semi-direct decomposition 
$(\mathrm{Aut} D)_{p}=\ker(\phi_p) \rtimes \mathrm{OSP}(2m|n)$. 
There is an extension $G_{0,p}$  of the Lie superalgebra $\Lie(\ker(\phi_p))$ by  $\mathbf{k}[\hbar]/\hbar^{p+1}$, which is a nilpotent Lie superalgebra. By [Theorem 3.2, \cite{MO}], there exist  an algebraic supergroup $\tilde{G}_{p}^{0}$ such that $\Lie(\tilde{G}_{0,p})=G_{0,p}$.  Set $\tilde{G}_p=G_{p}^{0}\rtimes \mathrm{OSP}(2n|m)$.  It is clear that $\langle \tilde{G}_p, G_p \rangle $ is a Harish-Chandra pair. We simplify  $\langle \tilde{G}_p,  G_p \rangle$ by $G_p$. We have the following extension of Harish-Chandra pairs
\begin{equation}\label{ext_5.0}
	\begin{tikzcd}[cramped, sep=small]
		1 \arrow[r] &  \mathbf{k}[\hbar]/\hbar^{p+1} \arrow[r] &  G_{p}  \arrow[r] &  \langle (\Aut D)_p , (\Der D)_p \rangle  \arrow[r] &  1.
	\end{tikzcd}	
\end{equation} 
By abuse of  notation, $G$ also denotes the Harish-Chandra pair
obtained by taking $p \rightarrow \infty$. By construction, there is an extension 
\begin{equation}\label{ext_5.2}
	\begin{tikzcd}[cramped, sep=small]
		1 \arrow[r] &  \mathbf{k}_{\hbar}  \arrow[r] &  G  \arrow[r] &  \langle \Aut D , \Der D \rangle  \arrow[r] &  1.
	\end{tikzcd}	
\end{equation}

\subsection{Harish-Chandra torsors}

Let $\mathcal{M}_{\mathrm{coord}}$ denote the \textit{  bundle of formal coordinate systems} on $X$. This is a 
superscheme functor defined as follows.
For each $T \in \mathrm{Al}_{\mathbf{k}}$, $\mathcal{M}_{\mathrm{coord}}(T)$ takes the set of pairs $(\eta, \varphi)$, where $\eta:  \Spec T \rightarrow X $ is a morphism of schemes over $\mathbf{k}$ ( we also write $ \eta \in X(T)$),  
$ \varphi: \hat{\mathcal{O}}_{X,\eta} \rightarrow \mathcal{O}_{\Spec T} \otimes_{\mathbf{k}} A$
is an even isomorphism of $\mathbf{k}$-superalgebras and  $\hat{\mathcal{O}}_{X,\eta}$ is the completion of $\Spec T \times_{\mathbf{k}} X$ at the graph of $\eta$. The superscheme  $\mathcal{M}_{\mathrm{coord}}$ is an  $\Aut(A)$-torsor over $X$ and
the action map $ a:W_{\mathcal{M}_{\mathrm{coord}}} \rightarrow \mathcal{E}_{\mathcal{M}_{\mathrm{coord}}}$ is an isomorphism.
Thus  $\theta_{\mathcal{M}}=a^{-1}$ is a flat connection of  $\mathcal{M}_{\mathrm{coord}}$. Moreover, $\mathcal{M}_{\mathrm{coord}}$ is a transitive $(\Aut A, W)$-torsor over $X$.

\begin{prop}[Smooth super structures]
Let $X_{\bbz}$ be a smooth variety of dimension $k$ over $\mathbf{k}$. Then there exists a one-to-one correspondence between  
the structures of  smooth supervarieties of dimension $(k|m)$ on $X_{\bbz}$ and  $ \langle \Aut A,W \rangle$-torsors over $X_{\bbz}$. Here $A$ and $W$ are defined similarly as above (but $k$ is not necessarily even).
\end{prop}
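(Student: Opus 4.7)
The plan is to set up a Gelfand--Kazhdan style descent, adapted to the super setting, realizing both directions of the correspondence explicitly and then checking they are mutually inverse.

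For the forward direction, given a smooth supervariety $X = (X_{\bbz}, \mathcal{O}_X)$ of dimension $(k|m)$ over $X_{\bbz}$, I would associate the bundle $\mathcal{M}_{\mathrm{coord}}(X)$ of formal coordinate systems defined exactly as in the passage preceding the proposition: its $T$-points are pairs $(\eta, \varphi)$ with $\eta \in X_{\bbz}(T)$ and $\varphi: \hat{\mathcal{O}}_{X,\eta} \stackrel{\sim}{\longrightarrow} \mathcal{O}_{\Spec T} \hat{\otimes}_{\mathbf{k}} A$ an isomorphism of superalgebras. Smoothness of $X$ is exactly the statement that $\hat{\mathcal{O}}_{X,x} \cong \mathbf{S}[[\Der^{*}_x]]$ at every closed point; choosing local \'etale coordinates in the sense of the proof of Proposition \ref{de Rham to Cech} then produces local trivializations of $\mathcal{M}_{\mathrm{coord}}(X)$, making it a Zariski-locally-trivial $\Aut A$-torsor over $X_{\bbz}$. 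The natural action map $a: W_{\mathcal{M}_{\mathrm{coord}}(X)} \to \mathcal{E}_{\mathcal{M}_{\mathrm{coord}}(X)}$ is a bundle isomorphism, so $\theta := a^{-1}$ is a flat connection making $\mathcal{M}_{\mathrm{coord}}(X)$ a transitive $\langle \Aut A, W\rangle$-torsor, as already indicated above.

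For the backward direction, given a $\langle \Aut A, W\rangle$-torsor $(\mathcal{M},\theta_{\mathcal{M}})$ over $X_{\bbz}$, I would recover the structure sheaf by horizontal descent. The superalgebra $A$ is itself a $\langle \Aut A, W\rangle$-module, so its localization $\mathcal{A} := \mathrm{Loc}(\mathcal{M}, A)$ is a sheaf of supercommutative $\mathcal{O}_{X_{\bbz}}$-algebras carrying the flat connection $\nabla$ constructed in \S4.1. Define the candidate structure sheaf as the horizontal sections $\mathcal{O}_X := \ker\nabla \subset \mathcal{A}$. On a Zariski open $U \subset X_{\bbz}$ where $\mathcal{M}|_U$ is trivialized, an explicit flat frame identifies $\mathcal{O}_X(U)$ with $\mathcal{O}_{X_{\bbz}}(U) \otimes_{\mathbf{k}} \bigwedge(s_1, \dots, s_m)$ (after completion in the even formal variables pulled from the \'etale chart); this simultaneously produces formal coordinates at every point and verifies smoothness of the resulting supervariety $(X_{\bbz}, \mathcal{O}_X)$ of dimension $(k|m)$.

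Finally, I would check mutual inverseness. Starting from a smooth $X$, the construction $\mathcal{M}_{\mathrm{coord}}(X) \mapsto \ker\nabla$ tautologically recovers $\mathcal{O}_X$, because horizontal sections of $\mathrm{Loc}(\mathcal{M}_{\mathrm{coord}}(X), A)$ are by definition $\Aut A$-equivariant maps out of $\mathcal{M}_{\mathrm{coord}}(X)$ that commute with the Grothendieck-type connection, which is a reformulation of being a local section of $\mathcal{O}_X$. Conversely, starting from $\mathcal{M}$, the bundle of coordinates on the reconstructed supervariety is canonically identified with $\mathcal{M}$ since both are transitive $\langle \Aut A, W\rangle$-torsors restricting to the same trivialization data on each chart, and transitive torsors with this structure are rigid. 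The main obstacle will be the backward direction: verifying that $\ker\nabla$ is locally free of the correct super rank with the expected completions, which amounts to a super formal Poincar\'e lemma along the fibres of $\mathcal{M} \to X_{\bbz}$; this is the direct super analogue of the classical Gelfand--Kazhdan computation and requires only the uniqueness of formal coordinate systems used above.
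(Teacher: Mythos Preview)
Your proposal is correct and follows essentially the same approach as the paper: the forward direction assigns to $X$ its bundle of formal coordinate systems restricted to $X_{\bbz}$ (the paper phrases this as pulling back $\mathcal{M}_{\mathrm{coord}}$ along $\iota:X_{\bbz}\hookrightarrow X$, which is your construction with $\eta\in X_{\bbz}(T)$), and the backward direction takes $\mathcal{O}_X$ to be the sheaf of flat sections of $\mathrm{Loc}(\mathcal{M},A)$. The paper's proof is terse and omits the mutual-inverseness check and the local analysis of $\ker\nabla$ that you sketch; your added detail is appropriate.
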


\begin{proof}
Given a supervariety $X$, we have constructed the  $\langle \Aut A,W \rangle$-torsor $\mathcal{M}_{\mathrm{coord}}$. Its pull-back  $\iota^*\mathcal{M}_{\mathrm{coord}}$ by $\iota: X_{\bbz} \hookrightarrow X$ is an  $ \langle \Aut A,W \rangle$-torsor over $X_{\bbz}$.  Conversely, given an  $\langle \Aut A,W \rangle$-torsor $\mathcal{M}_{\mathrm{coord}}$ over $X_{\bbz}$, 	let $\mathcal{O}_X$ be
 the sheaf of flat sections of  $\mathrm{Loc}(\mathcal{M}_{\mathrm{coord}},A)$. The ringed space  $X=(X_{\bbz},\mathcal{O}_X)$ is a smooth supervariety over $X_{\bbz}$.
\end{proof}
\begin{prop}[Super symplectic structures]\label{sym_tor} 
The set of even super symplectic forms on $X$ is bijective  with the set of 
reductions of the  $\langle \mathrm{Sym} A, H \rangle $-torsors to the $\langle \mathrm{Aut} A, W \rangle$-torsor $\mathcal{M}_{\mathrm{coord}}$;
\end{prop}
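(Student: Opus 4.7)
The plan is to mimic the non-super construction of Bezrukavnikov--Kaledin, replacing the classical Darboux theorem by its super version recalled in the excerpt (the cited result of \cite{SX}). The correspondence should send an even symplectic form $\omega$ on $X$ to the sub-functor of $\mathcal{M}_{\mathrm{coord}}$ cut out by ``the formal chart $\varphi$ pulls the standard form on $A$ back to $\omega$,'' and in the other direction it sends a reduction to the pullback along flat sections of the standard form living on $\mathrm{Loc}(\mathcal{M}_{\mathrm{coord}},A)$.

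First, for a given even symplectic $\omega$, I define
$\mathcal{M}_{\mathrm{sym}}(T) \subset \mathcal{M}_{\mathrm{coord}}(T)$ to consist of those $(\eta,\varphi)$ for which the pullback $\varphi^{*}\omega_{\mathrm{st}}$ of the standard form on $A$ (given by \eqref{equ_5.1}) agrees with the formal germ of $\omega$ at $\eta$. The super formal Darboux theorem ensures that for every $T$-point $\eta$ of $X$ there exists \emph{some} $\varphi$ with this property, so $\mathcal{M}_{\mathrm{sym}} \to X$ is surjective; the uniqueness clause of the same theorem says that any two such $\varphi$ differ by an automorphism of $A$ preserving $\omega_{\mathrm{st}}$, i.e.\ an element of $\mathrm{Sym}\,A$. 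Hence $\mathcal{M}_{\mathrm{sym}}$ is a $\mathrm{Sym}\,A$-sub-torsor of $\mathcal{M}_{\mathrm{coord}}$, and the embedding is a reduction of structure group.

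Next, I check the Harish-Chandra compatibility, i.e.\ that the flat connection $\theta_{\mathcal{M}_{\mathrm{coord}}}$ descends to a flat $H$-valued connection on $\mathcal{M}_{\mathrm{sym}}$. On $\mathcal{M}_{\mathrm{sym}}$ the $W$-valued form $\alpha=\rho^{*}\theta_{\mathcal{M}_{\mathrm{coord}}}$ acts by derivations that preserve $\varphi^{*}\omega_{\mathrm{st}}$ (because both the starting and varied chart satisfy the defining condition of $\mathcal{M}_{\mathrm{sym}}$), hence lands in symplectic vector fields on the formal polydisk. Since the de Rham cohomology of $\hat{\mathbf{k}}^{(2n|m)}$ is trivial in positive degree (Theorem \ref{equi_dR_HOM} together with the usual Poincar\'e lemma for $\mathbf{k}^{(2n|0)}$), every symplectic vector field is Hamiltonian, so $\alpha$ takes values in $H$. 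Flatness is inherited from $\mathcal{M}_{\mathrm{coord}}$.

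For the inverse, given a reduction $\mathcal{M}_{\mathrm{sym}} \hookrightarrow \mathcal{M}_{\mathrm{coord}}$, the standard symplectic form $\omega_{\mathrm{st}} \in \Omega^{2}_{A}$ is $\mathrm{Sym}\,A$-invariant and closed, so it defines a global flat section of $\mathrm{Loc}(\mathcal{M}_{\mathrm{sym}},\Omega^{2}_{A})$; descending along the isomorphism $\mathrm{Loc}(\mathcal{M}_{\mathrm{coord}},\Omega^{2}_{A})^{\nabla}\cong \Omega^{2}_{X}$ produced by $J^{\infty}(\Omega^{\bullet}_{X})$-type arguments (Proposition \ref{de Rham to Cech}) yields a closed two-form $\omega$ on $X$. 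Its non-degeneracy follows pointwise from non-degeneracy of $\omega_{\mathrm{st}}$, and the two constructions are inverse to one another by construction. The main technical obstacle is the passage from ``$\alpha$ is symplectic-valued'' to ``$\alpha$ is Hamiltonian-valued,'' which is precisely where the vanishing $H^{1}_{\mathrm{dR}}(\hat{\mathbf{k}}^{(2n|m)})=0$ (i.e.\ the formal super Poincar\'e lemma underlying Theorem \ref{equi_dR_HOM}) must be invoked; everything else is a direct super-analogue of the classical argument.
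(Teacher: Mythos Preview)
Your proposal is correct and follows the same route as the paper: define the reduction as the sub-functor of $\mathcal{M}_{\mathrm{coord}}$ consisting of Poisson-preserving formal coordinates (your $\mathcal{M}_{\mathrm{sym}}$ is exactly the paper's $\mathcal{M}_{\omega}$), and for the inverse localize the $\langle\mathrm{Sym}\,A,H\rangle$-invariant standard form $\omega_{\mathrm{st}}\in\Omega^{2}_{A}$. The paper's argument is much terser (``it is easy to see that $\mathcal{M}_{\omega}$ is a $\langle\mathrm{Sym}\,A,H\rangle$-torsor''), so your explicit appeal to the super formal Darboux theorem for transitivity and to the formal Poincar\'e lemma for the passage from symplectic to Hamiltonian vector fields supplies details the paper leaves implicit.
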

\begin{proof}
For any symplectic form $\omega$,  let $\mathcal{M}_{\omega}$ be the scheme functor
which sends each  $T \in \mathrm{Al}_{\mathbf{k}} $ to the set of the pairs $(\eta, \varphi)$, where $ \eta \in X(T)$
and  $\varphi: \hat{\mathcal{O}}_{X,\eta} \simeq \mathcal{O}_{\Spec T} \hat{\otimes}A$
 is an isomorphism of Poisson superalgebras. It is easy to see that $\mathcal{M}_{\omega}$ is
a $\langle \mathrm{Sym} A, H \rangle $-torsor.
Conversely,  given a $\langle \mathrm{Sym} A, H \rangle $-torsor $\mathcal{M}$ over $X$, we need to construct a symplectic form $\omega$ on $X$.  The standard symplectic form $\omega' \in \Omega_{A}^2$ on  $\hat{\mathbf{k}}^{2n|m}$ is $\langle\mathrm{Sym} A, H \rangle$-invariant. The desired symplectic form $\omega$ over $X$  is $\mathrm{Loc}(\mathcal{M},\omega')$.
\end{proof}

For a symplectic form $\omega$ on $X$, the corresponding  $\langle \mathrm{Sym} A, H \rangle $-torsor $\mathcal{M}_{\omega}$ given by Proposition \ref{sym_tor} is  transitive.
It is easy to check that
\begin{align*}
\mathrm{Loc}(\mathcal{M}_{\mathrm{coord}}, A)&=\mathrm{Loc}(\mathcal{M}_{\omega},A)=J^{\infty}(\mathcal{O}_X),   \\
\mathrm{Loc}(\mathcal{M}_{\mathrm{coord}}, \mathbf{k})&=\mathrm{Loc}(\mathcal{M}_{\omega}, \mathbf{k})=\mathcal{O}_X.	
\end{align*}

\subsection{Quantization via super Harish-Chandra torsor}
Recall that the notion $H^1_{\mathcal{M}_{\omega}}(X, \langle \Aut D, \Der D \rangle )$  denotes the set of liftings of $\mathcal{M}_{\omega}$  to an $\langle \Aut D, \Der D \rangle $-torsor with respect to the canonical homomorphism $\langle \Aut D, \Der D \rangle \rightarrow \langle \mathrm{Sym} A, H \rangle$.
\begin{prop}
There is a one-to-one correspondence between $Q(X, \omega)$  and $H^1_{\mathcal{M}_s}(X, \langle \Aut D, \Der D \rangle )$.
\end{prop}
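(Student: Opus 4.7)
The plan is to adapt the Bezrukavnikov--Kaledin construction from \cite{BK} to the super setting, using the super formal quantum Darboux theorem of \cite{SX} as the local classification input that replaces the classical Darboux theorem.

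First, I would build the map $Q(X,\omega) \to H^1_{\mathcal{M}_{\omega}}(X, \langle \Aut D, \Der D \rangle)$ as follows. Given a deformation quantization $(\mathcal{O}_{X,\hbar}, \ast)$, define the \emph{bundle of formal quantum coordinates} $\mathcal{M}_{\ast}$ as the superscheme functor sending $T \in \mathrm{Al}_{\mathbf{k}}$ to the set of pairs $(\eta, \varphi)$, where $\eta \in X(T)$ and $\varphi \colon (\hat{\mathcal{O}}_{X,\eta}[[\hbar]], \ast) \to \mathcal{O}_{\Spec T}\,\hat{\otimes}\,D$ is an even isomorphism of $\mathbf{k}_{\hbar}$-superalgebras. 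The quantum super Darboux theorem asserts that $D$ is the unique quantization of $A$ up to isomorphism, so $\mathcal{M}_{\ast}$ is locally non-empty and forms a Zariski-locally trivial $\Aut D$-torsor. The natural infinitesimal action of $\Der D$ on quantum coordinates produces a flat connection, upgrading $\mathcal{M}_{\ast}$ to a transitive $\langle \Aut D, \Der D \rangle$-torsor. Specializing $\hbar \mapsto 0$ (i.e., pushforward along $\Aut D \twoheadrightarrow \mathrm{Sym}\,A$) visibly recovers $\mathcal{M}_{\omega}$, so $\mathcal{M}_{\ast}$ lies in $H^1_{\mathcal{M}_{\omega}}(X, \langle \Aut D, \Der D \rangle)$.

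For the inverse, given a lifting $\mathcal{M}_1$ of $\mathcal{M}_{\omega}$, I would take $\mathcal{O}_{X,\hbar}^{\mathrm{new}}$ to be the sheaf of flat sections of the associated $\mathcal{D}_X$-module $\mathrm{Loc}(\mathcal{M}_1, D)$ with respect to the induced flat connection. Since the $\mathbf{k}_{\hbar}$-algebra structure on $D$ is $\Aut D$-equivariant, it descends to the associated bundle and restricts to flat sections, giving $\mathcal{O}_{X,\hbar}^{\mathrm{new}}$ the structure of a sheaf of $\mathbf{k}_{\hbar}$-superalgebras. Flatness over $\mathbf{k}_{\hbar}$ is inherited from $D$. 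Reducing mod $\hbar$ returns $\mathcal{O}_X$ because $\mathcal{M}_1$ lies over $\mathcal{M}_{\omega}$ and $\mathrm{Loc}(\mathcal{M}_{\omega}, A) = J^{\infty}(\mathcal{O}_X)$, whose flat sections are $\mathcal{O}_X$. The first-order commutator identity $[a,b] \equiv \{a,b\}\hbar \pmod{\hbar^2}$ is automatic, since it holds already at the level of $D$ and $A$ with the standard Poisson bracket \eqref{equ_5.1}, and pushing forward to $\mathcal{M}_{\omega}$ transports this to the symplectic bracket $\{\,,\,\}_{\omega}$.

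Finally, I would verify mutual inverseness and equivalence-compatibility of the two constructions. That flat sections of $\mathrm{Loc}(\mathcal{M}_{\ast}, D)$ reproduce $(\mathcal{O}_{X,\hbar}, \ast)$ is essentially tautological from the functor-of-points definition; the opposite composition reconstructs $\mathcal{M}_{\ast}$ by reading off quantum coordinate isomorphisms. For the equivalence classes, one checks that $\mathbf{k}_{\hbar}$-superalgebra isomorphisms of quantizations (lifting the identity on $\mathcal{O}_X$) correspond bijectively to isomorphisms of Harish-Chandra $\langle \Aut D, \Der D \rangle$-torsors over $\mathcal{M}_{\omega}$. The main obstacle will be keeping track of super sign conventions throughout, especially when showing (i) that multiplication in $D$ interacts correctly with the flat connection to yield a well-defined associative product on flat sections, and (ii) that the gauge action on torsors matches conjugation equivalence of star products. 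Once these are handled, all other steps are direct super analogues of arguments in \cite{BK}, using the super framework of Sections 3 and 4.
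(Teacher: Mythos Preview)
Your proposal is correct and follows essentially the same approach as the paper: from a quantization you build the functor of formal quantum frames using the super formal quantum Darboux theorem \cite{SX}, and from a lifting of $\mathcal{M}_{\omega}$ you recover the quantization as flat sections of $\mathrm{Loc}(\mathcal{M}_1,D)$. The paper's proof is terser and presents the two directions in the opposite order, but the constructions and the key input (quantum Darboux) are identical; your additional remarks on mutual inverseness and compatibility with equivalences simply spell out what the paper leaves implicit.
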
	

\begin{proof}
Given  $\mathcal{M}_q \in H^1_{\mathcal{M}_s}(X, \langle \Aut D, \Der D \rangle )$,
we have a quantization of $(X, \omega)$ by taking flat sections of $\mathrm{Loc}(\mathcal{M}_q,D)$. 

Conversely, for given  $\mathcal{D} \in Q(X, \omega)$, we need to construct a $\langle \Aut D, \Der D \rangle $-torsor.  For $T \in \mathrm{Al}_{\mathbf{k}}$ and $\eta \in X(T)$, denote by $p$ the projection $\Spec T \times X \rightarrow X$. 
Let $\mathcal{D}_\eta=p^{*}(\mathcal{D})$, $\mathcal{J}_{\eta} \subset \mathcal{O}_{\Spec T}\times \mathcal{O}_{X}$ be the ideal of graph of $\eta$  and $\widehat{\mathcal{D}_\eta}$ be the completion of  $\mathcal{D_\eta}$ with respect to the ideal $\mathcal{J}_{\eta}+\hbar\mathcal{D}_\eta$.
By the formal quantum Darboux theorem \cite{SX}, $\widehat{\mathcal{D}_\eta}$ is isomorphic to $\mathcal{O}_{\Spec T}\otimes_{\mathbf{k}} D$. 
Let $\mathcal{M}_q$ be the functor sending $T \in \mathrm{Al}_{\mathbf{k}}$ to the set of pairs $(\eta, \Phi)$ where $ \eta \in X(T)$ and $\Phi$ is an  isomorphism between $\mathcal{O}_{\Spec T}\otimes_{\mathbf{k}} D$ and $\widehat{\mathcal{D}_\eta}$.  It is easy to see that  $\mathcal{M}_q$ is a transitive $\langle \Aut D, \Der D \rangle$-torsor and lies in $H^1_{\mathcal{M}_{\omega}}(X, \langle \Aut D, \Der D \rangle )$.
\end{proof}

\subsection{Classification}

\begin{prop}\label{prop5.5}
	Let $(X, \omega)$ be a smooth  symplectic supervariety. 
	If $Q(X,\omega)$ is non-empty  , then it is bijective with $H_{F}^{2}(X)_{\hbar}$.
\end{prop}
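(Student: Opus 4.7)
The plan is to reduce the classification to the lifting problem treated in Section~4. By the preceding proposition, $Q(X,\omega)$ is in bijection with $H^{1}_{\mathcal{M}_{\omega}}(X, \langle \Aut D, \Der D \rangle)$, the set of $\langle \Aut D, \Der D \rangle$-torsor lifts of the transitive $\langle \mathrm{Sym} A, H \rangle$-torsor $\mathcal{M}_{\omega}$. Since $\langle \Aut D, \Der D \rangle$ is the inverse limit of the truncated Harish--Chandra pairs $\langle (\Aut D)_{p}, (\Der D)_{p} \rangle$ obtained by reducing modulo $\hbar^{p}D$, I will classify these lifts step by step in the $\hbar$-adic tower, invoking Propositions~\ref{thm existence} and~\ref{thm equivariance} at each level.

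The right column of the commutative diagram \eqref{commu-diagram} provides, for each $p \geq 0$, an extension of Harish--Chandra pairs
\begin{equation*}
0 \to \hbar^{p} H \to \langle (\Aut D)_{p+1}, (\Der D)_{p+1} \rangle \to \langle (\Aut D)_{p}, (\Der D)_{p} \rangle \to 0,
\end{equation*}
whose kernel is the module $\hbar^{p}H$ of Hamiltonian vector fields twisted by $\hbar^{p}$. The hypothesis $Q(X,\omega) \neq \emptyset$ forces all obstruction classes in $H^{2}_{\mathrm{dR}}(X, \mathrm{Loc}(\mathcal{M}_{\omega}, \hbar^{p} H))$ to vanish, so by Proposition~\ref{thm existence} the set of lifts from level $p$ to level $p+1$ is a torsor over $H^{1}_{\mathrm{dR}}(X, \mathrm{Loc}(\mathcal{M}_{\omega}, \hbar^{p} H))$, and Proposition~\ref{thm equivariance} ensures that these torsor structures are compatible across the tower.

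The key cohomological identification is $H^{1}_{\mathrm{dR}}(X, \mathrm{Loc}(\mathcal{M}_{\omega}, H)) \cong H^{2}_{F}(X)$. For this, I would start from the short exact sequence of $\langle \mathrm{Sym} A, H \rangle$-modules
\begin{equation*}
0 \to \mathbf{k} \to A \to H \to 0,
\end{equation*}
where $H \cong A/\mathbf{k}$ via the Hamiltonian map. Localization through $\mathcal{M}_{\omega}$ (using $\mathrm{Loc}(\mathcal{M}_{\omega},A) = J^{\infty}(\mathcal{O}_{X})$ and $\mathrm{Loc}(\mathcal{M}_{\omega},\mathbf{k}) = \mathcal{O}_{X}$) yields an exact sequence of $\mathcal{D}_{X}$-modules
\begin{equation*}
0 \to \mathcal{O}_{X} \to J^{\infty}(\mathcal{O}_{X}) \to \mathrm{Loc}(\mathcal{M}_{\omega}, H) \to 0.
\end{equation*}
Its long exact sequence of de Rham cohomology, together with Corollary~\ref{coro_3.7} giving $H^{i}_{\mathrm{dR}}(X, J^{\infty}(\mathcal{O}_{X})) = H^{i}(X, \mathcal{O}_{X})$, matches termwise with the Hodge sequence \eqref{les3.4}; naturality of the connecting homomorphisms then identifies $H^{1}_{\mathrm{dR}}(X, \mathrm{Loc}(\mathcal{M}_{\omega}, H))$ with $H^{2}_{F}(X)$, and twisting by $\hbar^{p}$ gives the analogous isomorphism at each level.

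Putting these pieces together, $Q(X,\omega)$ becomes a torsor over $\prod_{p \geq 0}\hbar^{p} H^{2}_{F}(X) = H^{2}_{F}(X)_{\hbar}$, and any fixed element of $Q(X,\omega)$ serves as a basepoint turning this torsor into the desired bijection. The main obstacle will be carrying out the inverse-limit argument with care: one must verify that the successive $H^{2}_{F}(X)$-actions glue into a single action of $H^{2}_{F}(X)_{\hbar}$, and that the central $\mathbf{k}[\hbar]/\hbar^{p+1}$ contributions appearing in the left column of \eqref{commu-diagram} do not introduce extra equivalence classes (one expects them to be absorbed by inner automorphisms at each level). Proposition~\ref{thm equivariance} is the decisive tool here, as it records precisely how the coboundary maps relate successive levels of the tower.
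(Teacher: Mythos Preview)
Your cohomological identification $H^{1}_{\mathrm{dR}}(X,\mathrm{Loc}(\mathcal{M}_\omega,H))\cong H^{2}_{F}(X)$ via the sequence $0\to\mathbf{k}\to A\to H\to 0$, Corollary~\ref{coro_3.7}, and comparison with the Hodge sequence \eqref{les3.4} is exactly what the paper does. The difference is that the paper bypasses the $\hbar$-adic tower entirely: it applies Proposition~\ref{thm existence} a single time to the extension
\[
1 \longrightarrow H_{\hbar} \longrightarrow \langle \Aut D,\Der D\rangle \longrightarrow \langle \mathrm{Sym} A,H\rangle \longrightarrow 1,
\]
treating these pro-objects as ordinary Harish--Chandra pairs (as announced just before \S3.4). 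The non-emptiness of $Q(X,\omega)$ then gives $\mathrm{Loc}(\mathcal{M}_\omega,c)=0$ immediately, so $Q(X,\omega)$ is a torsor over $H^{1}_{\mathrm{dR}}(X,\mathrm{Loc}(\mathcal{M}_\omega,H_\hbar))$ with no inverse-limit assembly needed; the comparison diagram \eqref{equ surjec} with the Hodge sequence is written once over $\mathbf{k}_\hbar$ rather than degree by degree. Your level-by-level strategy is essentially the one the paper deploys later in the proof of Theorem~\ref{thm_peroid}, where the inductive structure is genuinely needed to construct the period map; for Proposition~\ref{prop5.5} it is extra work, and the loose ends you flag at the end (that \emph{every} level-$p$ lift, not just those descending from a global one, has vanishing obstruction, and that the central $\mathbf{k}[\hbar]/\hbar^{p+1}$ contributions are absorbed) are precisely the price of that route.
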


\begin{proof} 
	
Denote by  $V$,$U$ and $W$  the $\langle \mathrm{Sym} A , H \rangle$-modules  $A_{\hbar}$, $\mathbf{k}_{\hbar}$ and 
	$H_{\hbar}$, respectively. 
	Let $c$ be the representation class of the extension
	$$\begin{tikzcd}[cramped, sep=small]
		1 \arrow[r] &  W  \arrow[r] &   \langle \Aut D , \Der D \rangle  \arrow[r] &    \langle \mathrm{Sym} A , H \rangle \arrow[r] &  1.
	\end{tikzcd}$$
	Denote by $\mathcal{M}_{\omega}$ be the $\langle \mathrm{Sym} A , H \rangle$-torsor associated with ${\omega}$. Let $ \mathcal{U},\mathcal{V}$ and $\mathcal{W}$ be the localizations of $U,V$ and $W$ by 
	$\mathcal{M}_{\omega}$, respectively. Since a deformation quantization exists, we have that  $\mathrm{Loc}(\mathcal{M}_{\omega}, c)=0$ and $Q(X,\omega)$ is an
	$H^{1}_{\mathrm{dR}}(X,\mathcal{W})$-torsor by 
	Proposition \ref{thm equivariance}.   We claim that there is a  commutative diagram  
	\begin{equation}\label{equ surjec}
		\begin{CD}
			\xymatrix{H_{\mathrm{dR}}^{i-1}(X, \mathcal{W}) \ar[r] \ar@{.>}[d]  &
				H_{\mathrm{dR}}^{i}(X, \mathcal{U})	\ar[r] \ar[d] &  H_{\mathrm{dR}}^{i}(X,\mathcal{V}) \ar[d]\\
				H_{F}^{i}(X)_{\hbar}  \ar[r] & H_{\mathrm{dR}}^{i}(X)_{\hbar} \ar[r] & H^{i}(X,\mathcal{O}_{X})_{\hbar}} 
		\end{CD} 
	\end{equation}
	for all $i\geq 1$. 
	Here the horizontal lines are the long exact sequences in \eqref{les} and \eqref{les3.4}. The second and third vertical lines are the isomorphisms given by  the fact  $\mathrm{Loc}(\mathcal{M}_{\omega}, \mathbf{k})=\mathcal{O}_X$ and
	Corollary \ref{coro_3.7}, respectively.  The squares on the left  of \eqref{equ surjec} are commutative for all $i \geq 0$. The dotted line in \eqref{equ surjec}, which is given in such a way that it makes \eqref{equ surjec} commutative, is an isomorphism. 
\end{proof}

\subsection{Period map}

 Let $c \in   H^2(\langle \Aut , \Der D \rangle, \mathbf{k}_{\hbar}) $ be the  representation class of the extension \eqref{ext_5.2}.
For a quantization $\mathcal{M} \in  H^1_{\mathcal{M}_{\omega}}(X, \langle \Aut D, \Der D \rangle )=Q(X,\omega)$, the period map is defined by
$$\mathrm{Per}:  Q(X,\omega) \rightarrow  H_{\mathrm{dR}}^2(X)_{\hbar};  \mathrm{Per}(\mathcal{M})= \mathrm{Loc}(\mathcal{M},c). $$ 
The period map can be constructed inductively as follows.  Let 
$$c_p \in H^2(\langle (\Aut D)_p , (\Der D)_p \rangle, \mathbf{k}[\hbar]/\hbar^{p+1})$$ 
be the class representing \eqref{ext_5.2}  and set 
$$\mathrm{Per}_p: \langle (\Aut D)_p , (\Der D)_p \rangle  \rightarrow  H_{\mathrm{dR}}^2(X)[\hbar]/\hbar^{p+1};  \mathrm{Per}_p(\bar{\mathcal{M}}_p) = \mathrm{Loc}(\bar{\mathcal{M}}_p,c_p). $$
Here $\bar{\mathcal{M}}_p$ denotes the push-forward of $\mathcal{M}$ by the canonical quotient $ \langle \Aut D, \Der D \rangle  \twoheadrightarrow \langle (\Aut D)_p , (\Der D)_p \rangle$. Then we have $\lim_{p \to \infty}\mathrm{Per}_p =\mathrm{Per}$.

\begin{theorem}\label{thm_peroid} 
Let $(X,\omega)$ be a smooth symplectic supervariety. Assume that $X$ is admissible. 
Then the period  map $\mathrm{Per}$ is injective. Let $F$ be a splitting of the 
natural inclusion $i: H_{F}^{2}(X) \hookrightarrow H_{\mathrm{dR}}^2(X)$, that is, a morphism from 
$H_{\mathrm{dR}}^2(X)$ to $H_{F}^{2}(X)$ satisfying $i\circ F=\mathrm{Id}_{H_{F}^{2}(X)}$.
Denote by  $F_{\hbar}: H_{\mathrm{dR}}^2(X)_{\hbar} \rightarrow H_{F}^{2}(X)_{\hbar}$ the  natural 
extension of $F$.
Then $F_{\hbar}\circ \mathrm{Per}$ is an isomorphism and 
$$\mathrm{Im}(F_{\hbar}\circ\mathrm{Per})=F(\omega)+
\hbar H_{F}^{2}(X)_{\hbar}.$$ 
\end{theorem}	

\begin{proof}
We prove the theorem by an argument  similar to that in \cite{BK}.
Note that $ H^{1}_{\mathcal{M}_\omega} (X, \langle (\mathrm{Der} D)_{0}, (\Aut D)_{0} \rangle)=\{ \mathcal{M}_{\omega}\}$ and  $\mathrm{Per}_{0}(\mathcal{M}_{\omega})=\omega$.
Suppose that the period maps $\mathrm{Per}_{k}$ is  injective  $k \leq p$.  To show the statement for 
$p+1$,  we apply Proposition \ref{thm existence} and \ref{thm equivariance} to the extension 
$$1 \rightarrow V \rightarrow G_{p+1} \rightarrow \langle (\mathrm{Der}D)_{p}, (\Aut D)_{p} \rangle \rightarrow 1,$$ 
and the short exact sequence  
$$0 \rightarrow U  \rightarrow V  \rightarrow W \rightarrow 0 $$ 
 of $ \langle \mathrm{Der}(D)_{p}, \Aut(D)_{p} \rangle $-modules. 
Here $U=\mathbf{k}[\hbar]/\hbar^{p+2}$, $V=\mathbf{k}[\hbar]/\hbar^{p+2} \oplus A\hbar^{p}$ and  $W=A\hbar^{p}/\mathbf{k}\hbar^{p}=H\hbar^{p}$. Let $\mathcal{U}, \mathcal{V}$ and $\mathcal{W}$ 
be the corresponding localizations by $\mathcal{M}_p$, respectively.  Recall the maps $b_2$ and $d_1$  in the long exact sequence \ref{les}. As in the non-super case \cite{BK}, 
under the admissibility assumption, 
$b_2$ is trivial and  $d_1$ is injective. This follows from a commutative diagram  constructed as \eqref{equ surjec}. 
Thereby  $\mathrm{Loc}(\mathcal{M}_p, b_2(c))=0$. Theorem \ref{thm existence} implies that $H^1_{\mathcal{M}_p}(X, \langle (\Aut D)_{p+1}, (\Der D)_{p+1} \rangle )$ is non-empty.
By Theorem \ref{thm equivariance}, we have 
\begin{equation}
 \mathrm{Per}_{p+1}: H^1_{\mathcal{M}_p}(X, \langle (\Aut D)_{p+1}, (\Der D)_{p+1} \rangle ) \rightarrow H_{\mathrm{dR}}^{2}(X, \mathcal{U})=H_{\mathrm{dR}}^{2}(X)[\hbar]/\hbar^{p+2}
\end{equation}
that is $H_{\mathrm{dR}}^{1}(X, \mathcal{W})$-equivariant.
The  left-hand side is a $H_{\mathrm{dR}}^{1}(X, \mathcal{W})$-torsor. Since $d_1$ is injective, the  $H_{\mathrm{dR}}^{1}(X,\mathcal{W})$-action on the right-hand side is free. Thus we show that $\mathrm{Per}_{p+1}$ is injective. 

The last statement follows from the facts that $\mathrm{Per}$ is 
$H_{\mathrm{dR}}^{1}(X, \mathcal{W})_\hbar=H_{F}^{2}(X)_\hbar$-equivariant and   
$ H_{F}^{2}(X)_\hbar \rightarrow H_{\mathrm{dR}}^{2}(X)_\hbar$ is injective.
\end{proof}

Recall that  $\iota: X_{\bbz} \hookrightarrow X $ is the canonical embedding. We have a symplectic variety $(X_{\bbz},\omega_{\bbz})$ with $\omega_{\bbz}:=\iota^{*}(\omega)$. 
It is not easy to relate quantizations of  $(X, \omega)$ and  $(X_{\bbz}, \omega_{\bbz})$  directly. 
However, we can relate them by the period map as follows. 

By Proposition \ref{prop_admi} (\rmnum{2}), there exist splittings $F: H_{\mathrm{dR}}^2(X) \twoheadrightarrow H_{F}^2(X)$ and  $F_{\bbz}: H_{\mathrm{dR}}^2(X_{\bbz}) \twoheadrightarrow H_{F}^2(X_{\bbz})$ to the maps in the first column of \eqref{equ-surjec}, respectively, such that the following diagram 
$$\begin{tikzcd}
& H_{\mathrm{dR}}^2(X) \arrow[r, "F"] \arrow[d] & H_{F}^2(X) \arrow[d] \\
& H_{\mathrm{dR}}^2(X_{\bbz})  \arrow[r, "F_{\bbz}"] & H_{F}^2(X_{\bbz})
\end{tikzcd}$$
is commutative.
\begin{theorem}\label{thm_5.4}
Assume that the symplectic supervariety $(X,\omega)$ and its reduced part $(X_{\bbz},\omega_{\bbz})$ are admissible. Fix two splittings $F$ and $F_{\bbz}$ as above. Let $F_{\hbar}$ and $(F_{\bbz})_{\hbar}$ defined similarly as in Theorem \ref{thm_peroid}. Then there exists a unique injective map $\iota_Q$ such that the diagram
	\begin{equation} \label{dia_5.7}
	 \begin{CD}
	 	\xymatrix{
		Q(X, \omega) \ar[rr]^{F_{\hbar}\circ\mathrm{Per}}\ar[d]_{\iota_Q} & & H_{F}^2(X)_{\hbar} \ar[d] \\
		Q(X_{\bbz}, \omega_{\bbz}) \ar[rr]^{(F_{\bbz})_\hbar\circ\mathrm{Per}_{\bbz}}& &  H_{F}^2(X_{\bbz})_{\hbar}}
	\end{CD}
	\end{equation}
is commutative.  
 \end{theorem}

\begin{proof}
 The right vertical arrow is injective by  Proposition \ref{prop_admi} (\rmnum{2}) and 
 horizontal arrows are isomorphisms by Theorem \ref{thm_peroid}.
\end{proof}
\begin{remark}\label{rmk_5.7}
 Assume that the supervariety $(X,\omega)$ and its reduced variety $(X_{\bbz},\omega_{\bbz})$ are admissible.  An interesting question is whether we have  $\mathrm{Im}(\mathrm{Per}) \subset \mathrm{Im}(\mathrm{Per}_{\bbz})$. If this holds, then the right vertical arrow and horizontal arrows of \eqref{dia_5.7}
 can be replaced by the middle vertical arrow in \eqref{equ-surjec} and by the period maps, respectively.
\end{remark}

\section{Deformation quantizations of super coadjoint orbits}
 This section studies  deformation quantizations of certain super nilpotent orbits from the basic Lie superalgebras over $\mathbf{k}$.  We start by checking the admissibility and splitness in a more general setting.  

\subsection{ Split supervarieties}
Let $X=(X_{\bbz},\mathcal{O}_{X})$ be a smooth supervariety and $Y_{\bbz}$ be an affine, irreducible and  Cohen-Macaulay variety. Assume that there is an open inclusion $i: X_{\bbz} \hookrightarrow Y_{\bbz}$ such that $i(X_{\bbz})$ coincides with the smooth locus of $Y_{\bbz}$  and $\mathrm{codim}_{Y_{\bbz}}(Y_{\bbz}/X_{\bbz}) \geq 2$.

\begin{lemma}[\cite{KP2}, Lemma 9.1]\label{prop-exten} 
Assume that the varieties $X_{\bbz}$ and $Y_{\bbz}$ 
satisfy the above conditions.
For any affine open subset  $U \subset Y_{\bbz}$, 	
$$\Gamma(U,i_{*}\mathcal{O}_{X_{\bbz}})=\Gamma(U ,\mathcal{O}_{Y_{\bbz}}).$$	
\end{lemma}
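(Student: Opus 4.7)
The plan is to deduce this classical Hartogs-type extension statement from local cohomology applied to a Cohen-Macaulay scheme. The assertion is local on $Y_{\bbz}$, so I fix an affine open $U \subset Y_{\bbz}$, set $Z_U = U \setminus X_{\bbz}$, and reformulate the claim as the equality $\Gamma(U,\mathcal{O}_{Y_{\bbz}}) = \Gamma(U \cap X_{\bbz},\mathcal{O}_{X_{\bbz}})$. Since $U$ is an open subscheme of the Cohen-Macaulay scheme $Y_{\bbz}$, it is itself Cohen-Macaulay, and the codimension hypothesis transfers to $\mathrm{codim}_{U} Z_U \geq 2$.

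Next I would invoke the long exact sequence of local cohomology with supports in $Z_U$:
$$0 \to H^0_{Z_U}(U,\mathcal{O}_U) \to H^0(U,\mathcal{O}_U) \to H^0(U \setminus Z_U,\mathcal{O}_U) \to H^1_{Z_U}(U,\mathcal{O}_U) \to \cdots$$
Since $U \setminus Z_U = U \cap X_{\bbz}$, the third term is exactly $\Gamma(U,i_*\mathcal{O}_{X_{\bbz}})$. Thus the lemma reduces to the two vanishing statements
$$H^0_{Z_U}(U,\mathcal{O}_U) = 0 \quad \text{and} \quad H^1_{Z_U}(U,\mathcal{O}_U) = 0.$$

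For these I would use the standard depth criterion: for any point $z \in Z_U$ one has $H^i_{\mathfrak{m}_z}(\mathcal{O}_{U,z}) = 0$ for $i < \mathrm{depth}\,\mathcal{O}_{U,z}$. By Cohen-Macaulayness, $\mathrm{depth}\,\mathcal{O}_{U,z} = \dim \mathcal{O}_{U,z}$, and the codimension-$2$ hypothesis gives $\dim \mathcal{O}_{U,z} \geq 2$ for every $z \in Z_U$. Hence the sheafy local cohomology $\mathcal{H}^i_{Z_U}(\mathcal{O}_U)$ vanishes for $i \leq 1$. Since $U$ is affine and $\mathcal{O}_U$ is quasi-coherent, the Leray-type spectral sequence $H^p(U,\mathcal{H}^q_{Z_U}(\mathcal{O}_U)) \Rightarrow H^{p+q}_{Z_U}(U,\mathcal{O}_U)$ degenerates in the relevant range, yielding the desired vanishing.

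The main obstacle, and the only genuinely non-formal input, is establishing the depth lower bound; once the Cohen-Macaulay hypothesis is used to convert the codimension assumption into a depth assumption, everything else is standard homological bookkeeping. In fact, because this exact statement is proved in \cite{KP2}, Lemma 9.1, the cleanest exposition is simply to cite that reference after noting that the setup of loc.~cit.~is identical to ours; the sketch above indicates why the result holds.
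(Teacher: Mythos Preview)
Your proposal is correct, and in fact goes further than the paper: the paper gives no proof at all of this lemma, simply citing \cite{KP2}, Lemma 9.1 as its source. Your local-cohomology argument via the excision sequence and the depth criterion under the Cohen--Macaulay hypothesis is the standard route to such Hartogs-type extension statements and is exactly what underlies the cited result; your concluding suggestion to just cite \cite{KP2} matches the paper's treatment precisely.
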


 Lemma \ref{prop-exten} implies  that $ i_{*}(\gr\mathcal{O}_{X})$ is a locally free sheaf over $Y_{\bbz}$. We regard   $(Y_{\bbz}, i_{*}(\gr\mathcal{O}_{X})) $ as a singular supervariety. 
 Let $E$ be the vector bundle over $Y_{\bbz}$ such that $ \bigwedge^{\bullet} E=i_{*}(\gr\mathcal{O}_{X})$, and $\mathcal{K}$ be the sheaf of unipotent  groups defined by 
 $$ \mathcal{K}(U)=\{g \in \Aut(\bigwedge^{\bullet} E)(U)| (g- \mathrm{id}) (\bigwedge^{i} E(U)) \subset  \bigwedge^{i+2} E(U) \}.$$
 
 If $Y_{\bbz}$ is smooth,
 the supervarieties $Y$ over $Y_{\bbz}$ with $\gr(Y)=(Y_{\bbz},\bigwedge^{\bullet} E)$ are classified by   $H^{1}(Y_{\bbz},\mathcal{K})$ up to isomorphisms \cite{Pe1}.  By Lemma \ref{prop-exten},  this statement can be extended to the singular supervarieties $Y_{\bbz}$ which satisfy the conditions therein.  Let $\mathfrak{k}$ be the sheaf of  vector fields given by 
$$\mathfrak{k}(U)=\Lie(\mathcal{K}(U)).$$
Since the group $\mathcal{K}(U)$ is unipotent, it is bijective with 
its Lie algebra  $\Lie(\mathcal{K}(U))$ as sets. Hence $H^{1}(Y_{\bbz},\mathcal{K})=H^{1}(Y,\mathfrak{k})$ as sets.
Since $Y_{\bbz}$ is affine, $H^{1}(Y_{\bbz},\mathfrak{k})=0$.
By the same argument in the proof of [Corollary 3.4 \cite{Pe1}], we can show that
$(Y_{\bbz},i_{*}\mathcal{O}_{X})$ is split. Restricting to the smooth locus, we show that $X$ is split. The above discussion can be summarized as follows.   
\begin{prop}\label{prop_split}
Assume that $X$, $X_{\bbz}$ and $Y_{\bbz}$ 
satisfy the above conditions. 
Then the ringed space $(Y_{\bbz},i_{*}\mathcal{O}_{X})$ is,  and accordingly, $X$ is a split supervariety.  
\end{prop}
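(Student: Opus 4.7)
The plan is to adapt the Penkov-style classification of graded models of supervarieties (\cite{Pe1}) from the smooth base setting to our Cohen-Macaulay $Y_{\bbz}$, with Lemma \ref{prop-exten} serving as the bridge. First, because $X$ is smooth, $\gr\mathcal{O}_X$ is Zariski-locally of the form $\bigwedge^{\bullet} E_X$ for a vector bundle $E_X$ on $X_{\bbz}$. Lemma \ref{prop-exten} then tells us that sections of $i_{*}(\gr\mathcal{O}_X)$ over any affine open of $Y_{\bbz}$ agree with sections over its intersection with $X_{\bbz}$, so the local free structure extends across the codimension-$\geq 2$ singular locus and yields a genuine vector bundle $E$ on $Y_{\bbz}$ with $i_{*}(\gr\mathcal{O}_X)=\bigwedge^{\bullet} E$. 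Thus $(Y_{\bbz}, i_{*}(\gr\mathcal{O}_X))$ is already the ``model'' supervariety over $Y_{\bbz}$, and it suffices to produce an isomorphism between $(Y_{\bbz}, i_{*}\mathcal{O}_X)$ and this model.

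Next I would classify all sheaves of super commutative algebras $\mathcal{F}$ on $Y_{\bbz}$ with $\gr\mathcal{F}\simeq\bigwedge^{\bullet} E$. In the smooth case (Penkov, \cite{Pe1}), these are parameterized up to isomorphism by the non-abelian cohomology set $H^{1}(Y_{\bbz},\mathcal{K})$, where $\mathcal{K}$ is the sheaf of filtered unipotent automorphisms of $\bigwedge^{\bullet} E$ introduced above. Penkov's argument is essentially the standard glueing story for filtered deformations, whose inputs are local isomorphisms of the graded model together with sheaf cohomology; Lemma \ref{prop-exten} lets every such local isomorphism defined on the smooth locus be promoted to an isomorphism of locally free sheaves on the enclosing affine open of $Y_{\bbz}$. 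Running the same argument in this setting shows that $(Y_{\bbz}, i_{*}\mathcal{O}_X)$ determines a class $[\alpha]\in H^{1}(Y_{\bbz},\mathcal{K})$, and splitness is equivalent to $[\alpha]=0$.

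Finally I would kill the cohomology. Since $\mathcal{K}$ is a sheaf of unipotent algebraic groups (the filtration forces $g-\mathrm{id}$ to be nilpotent on each stalk), the truncated exponential gives a set-theoretic bijection $\mathcal{K}(U)\simeq\mathfrak{k}(U)$ on every open $U$, identifying $H^{1}(Y_{\bbz},\mathcal{K})=H^{1}(Y_{\bbz},\mathfrak{k})$ as pointed sets. The Lie algebra sheaf $\mathfrak{k}$ is quasi-coherent over $\mathcal{O}_{Y_{\bbz}}$ and $Y_{\bbz}$ is affine, so $H^{1}(Y_{\bbz},\mathfrak{k})=0$. Hence $(Y_{\bbz}, i_{*}\mathcal{O}_X)$ is isomorphic to the model and is splitting; restricting along $i$ transports the splitting to $X$. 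The step I expect to be the main obstacle is precisely the middle one: verifying that Penkov's smooth-base classification still goes through for a Cohen-Macaulay $Y_{\bbz}$ whose singular locus has codimension $\geq 2$. Once this transfer is secured via Lemma \ref{prop-exten}, the remaining argument is the passage to the Lie algebra via exponential and the vanishing of quasi-coherent cohomology on an affine scheme.
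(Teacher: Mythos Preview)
Your proposal is correct and follows essentially the same route as the paper: extend the locally free structure of $\gr\mathcal{O}_X$ across the singular locus via Lemma~\ref{prop-exten} to get a bundle $E$ on $Y_{\bbz}$, invoke Penkov's classification (transported to the Cohen--Macaulay base using Lemma~\ref{prop-exten}) so that filtered models are parameterized by $H^{1}(Y_{\bbz},\mathcal{K})$, pass to the Lie algebra sheaf $\mathfrak{k}$ via the unipotent exponential bijection, and conclude by vanishing of $H^{1}$ on the affine $Y_{\bbz}$. The paper also singles out the same step you flag as the main obstacle, namely carrying Penkov's argument over to singular $Y_{\bbz}$ by means of Lemma~\ref{prop-exten}.
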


\subsection{Admissible supervarieties}
We recall the notion of depth and its relation with the cohomology of sheaves with supports in a closed subvariety.  Let $A_{\bbz}$ be a commutative $\mathbf{k}$-algebra, $\mathfrak{a}$ be an ideal of $A_{\bbz}$ and $M$ be an $A_{\bbz}$-module.  We say that a sequence $x_1, \ldots, x_{n} \in A_{\bbz} $ is $M$-\textit{regular} if the action map $ m \mapsto  x_im$ is an injective $A_{\bbz}$-module 
autohomomorphism  of $M/(x_1,\ldots,x_{i-1})M$  for all $i$, $ 1 \leq i \leq n$. The \textit{depth} of $M$ over $\mathfrak{a}$, which is denoted by $\mathrm{depth}_\mathfrak{a}M$, is defined as the maximum length of a $M$-regular sequence $x_1, \ldots, x_{n}$ 
with  all $x_i \in \mathfrak{a}$. In the case of $\mathfrak{a}=A_{\bbz}$, $\mathrm{depth}_\mathfrak{a}M$ is called the depth of $M$ and will be simplified by 
$\mathrm{depth}M$.

Let $A_{\bbz}=\mathbf{k}[Y_{\bbz}]$, $Z_{\bbz}=Y_{\bbz}/X_{\bbz}$ be the singular locus of $Y_{\bbz}$ and $\mathfrak{a}\subset A_{\bbz}$  the ideal of functions vanishing on $Z_{\bbz}$. For a sheaf $\mathcal{F}$  on $Y_{\bbz}$, denote by $H^{r}_{Z_{\bbz}}(Y_{\bbz},\mathcal{F})$ the cohomology of $\mathcal{F}$ with supports in $Y_{\bbz}$.
The following lemma is given in [\cite{Ha},Exercise \Rmnum{3},2.3, 3.3 and 3.4]. 
\begin{lemma}\label{depth_Lemma}
\begin{itemize}
	\item[(\rmnum{1})] We have the  long exact sequence  
	$$\cdots \rightarrow H^{r}_{Z_{\bbz}}(Y_{\bbz},\mathcal{F}) \rightarrow H^{r}(Y_{\bbz},\mathcal{F}) \rightarrow  H^{r}(X_{\bbz},\mathcal{F}|_{X_{\bbz}}) \rightarrow \cdots $$
	of sheaf cohomology.
	\item[(\rmnum{2})] For an $A_{\bbz}$-module 
	$M$, we have that $\mathrm{depth}_\mathfrak{a} M \geq r $ if and only if $H^{i}_{Z_{\bbz}}(\tilde{M})=0$ for all $i<r$. Here $\tilde{M}$ denotes the sheaf on $Y_{\bbz}$ associated to $M$.
\end{itemize}	
\end{lemma}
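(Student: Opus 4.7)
For part (\rmnum{1}), I would work with the derived functor formulation of local cohomology. Recall that $H^{r}_{Z_{\bbz}}(Y_{\bbz}, -)$ is, by definition, the $r$-th right derived functor of the left exact functor
$$ \Gamma_{Z_{\bbz}}(Y_{\bbz}, -) := \ker\bigl( \Gamma(Y_{\bbz}, -) \longrightarrow \Gamma(X_{\bbz}, (-)|_{X_{\bbz}}) \bigr).$$
Let $j : X_{\bbz} \hookrightarrow Y_{\bbz}$ denote the complementary open immersion. I would promote the standard exact sequence of sheaves $0 \to \underline{\Gamma}_{Z_{\bbz}}(\mathcal{F}) \to \mathcal{F} \to j_{*} j^{*} \mathcal{F}$ to the distinguished triangle
$$ R\underline{\Gamma}_{Z_{\bbz}}(\mathcal{F}) \longrightarrow \mathcal{F} \longrightarrow Rj_{*} j^{*}\mathcal{F} \xrightarrow{+1} $$
in the derived category of sheaves of abelian groups on $Y_{\bbz}$. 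Applying $R\Gamma(Y_{\bbz}, -)$, the composition identity $R\Gamma(Y_{\bbz}, -) \circ Rj_{*} = R\Gamma(X_{\bbz}, -)$ (which holds because $j_{*}$ sends flasque sheaves to flasque sheaves) yields the triangle
$$ R\Gamma_{Z_{\bbz}}(Y_{\bbz}, \mathcal{F}) \longrightarrow R\Gamma(Y_{\bbz}, \mathcal{F}) \longrightarrow R\Gamma(X_{\bbz}, \mathcal{F}|_{X_{\bbz}}) \xrightarrow{+1},$$
whose long exact sequence in cohomology is exactly the one claimed in (\rmnum{1}).

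For part (\rmnum{2}), I would reduce to algebra by observing that under the quasi-coherent correspondence between $A_{\bbz}$-modules and sheaves on the affine $\Spec A_{\bbz}$, one has $H^{i}_{Z_{\bbz}}(\tilde{M}) = H^{i}_{\mathfrak{a}}(M)$, the algebraic local cohomology modules; these are by construction $\mathfrak{a}$-torsion, i.e.\ every element is annihilated by some power of $\mathfrak{a}$. I would then argue by induction on $r$. For $r=1$, the condition $\mathrm{depth}_{\mathfrak{a}} M \geq 1$ is equivalent to the existence of an $M$-regular element in $\mathfrak{a}$, which is equivalent to $\mathfrak{a}$ not being contained in any associated prime of $M$, which in turn is equivalent to $H^{0}_{\mathfrak{a}}(M) = \bigcup_{n} \Hom_{A_{\bbz}}(A_{\bbz}/\mathfrak{a}^{n}, M) = 0$. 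For the inductive step, pick $x \in \mathfrak{a}$ that is $M$-regular and apply local cohomology to the short exact sequence
$$ 0 \longrightarrow M \xrightarrow{\;x\;} M \longrightarrow M/xM \longrightarrow 0.$$
Since $\mathrm{depth}_{\mathfrak{a}}(M/xM) \geq r-1$, the inductive hypothesis gives $H^{i}_{\mathfrak{a}}(M/xM) = 0$ for $i < r-1$, so the long exact sequence forces multiplication by $x$ to be injective on $H^{i}_{\mathfrak{a}}(M)$ for all $i < r$. Because $x \in \mathfrak{a}$ acts locally nilpotently on any $\mathfrak{a}$-torsion module, this injectivity forces $H^{i}_{\mathfrak{a}}(M) = 0$ for $i < r$. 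The converse direction is obtained by running the same argument backwards: if $H^{i}_{\mathfrak{a}}(M) = 0$ for $i < r$, then $\mathfrak{a}$ contains a non-zerodivisor $x$ on $M$, and the long exact sequence shows $H^{i}_{\mathfrak{a}}(M/xM) = 0$ for $i < r-1$, so by induction $\mathrm{depth}_{\mathfrak{a}}(M/xM) \geq r-1$, hence $\mathrm{depth}_{\mathfrak{a}} M \geq r$.

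The substantive pieces are (a) the identification $R\Gamma \circ Rj_{*} = R\Gamma$, which requires a little care but is routine once one chooses flasque resolutions, and (b) the fact that local cohomology modules are $\mathfrak{a}$-torsion, which is what makes the inductive argument in (\rmnum{2}) collapse. There is no real obstacle beyond these, and all of the content is classical; the lemma is cited precisely because it is entirely a bookkeeping translation of the Hartshorne exercises into the notation of the paper, and no new ingredient particular to the super setting intervenes (the sheaves $\mathcal{F}$ in question live on the purely even variety $Y_{\bbz}$).
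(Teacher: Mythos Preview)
Your proof is correct and follows the standard arguments for these classical facts. Note, however, that the paper does not actually prove this lemma: it is stated with the attribution ``[\cite{Ha}, Exercise \Rmnum{3}, 2.3, 3.3 and 3.4]'' and used as a black box, since the content is entirely classical commutative algebra on the purely even variety $Y_{\bbz}$ with no super ingredients. Your write-up is exactly the kind of argument one would give if asked to solve those exercises, so there is nothing to compare beyond observing that you have supplied what the paper deliberately omits.
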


\begin{prop}\label{prop6.4}
Let $X_{\bbz}$ be the smooth locus of an affine, irreducible and Cohen-Macaulay variety $Y_{\bbz}$. Assume that $\mathrm{codim}_{Y_{\bbz}}(Z_{\bbz})\geq 2$ and $H^1(X_{\bbz},\mathcal{O}_{X_{\bbz}})=H^2(X_{\bbz},\mathcal{O}_{X_{\bbz }})=0$. Then for any smooth supervariety $X=(X_{\bbz},\mathcal{O}_X)$ on $X_{\bbz}$, we have $H^{1}(X,\mathcal{O}_X)= H^{2}(X,\mathcal{O}_X)=0$.
\end{prop}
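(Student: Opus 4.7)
My plan is to use the splitting property furnished by Proposition \ref{prop_split} to reduce the super cohomology to even cohomology with coefficients in the exterior powers of a vector bundle on $Y_{\bbz}$, then propagate the given vanishing from $\mathcal{O}_{Y_{\bbz}}$ to all these exterior powers via the depth and local cohomology machinery of Lemma \ref{depth_Lemma}.

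The hypotheses of Proposition \ref{prop_split} are in force, so $X$ is splitting and $\mathcal{O}_X\cong\bigwedge^{\bullet}(E|_{X_{\bbz}})$, where $E$ is a finite-rank vector bundle on $Y_{\bbz}$ with $\bigwedge^{\bullet}E=i_{*}(\gr\mathcal{O}_X)$. Consequently
\[
H^{i}(X,\mathcal{O}_X)\;=\;\bigoplus_{k\geq 0}H^{i}\bigl(X_{\bbz},\,\bigwedge\nolimits^{k}\!E|_{X_{\bbz}}\bigr),
\]
and the $k=0$ summand vanishes by assumption, so it suffices to prove $H^{i}(X_{\bbz},\bigwedge^{k}\!E|_{X_{\bbz}})=0$ for $i=1,2$ and $k\geq 1$. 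For these $k$ I would apply Lemma \ref{depth_Lemma}(i) to the quasi-coherent sheaf $\mathcal{F}=\bigwedge^{k}\!E$ on the affine variety $Y_{\bbz}$: since $H^{r}(Y_{\bbz},\mathcal{F})$ vanishes for $r\geq 1$, the long exact sequence degenerates into isomorphisms
\[
H^{r}\bigl(X_{\bbz},\,\bigwedge\nolimits^{k}\!E|_{X_{\bbz}}\bigr)\;\cong\;H^{r+1}_{Z_{\bbz}}\bigl(Y_{\bbz},\bigwedge\nolimits^{k}\!E\bigr),\qquad r\geq 1,
\]
so the problem reduces to showing $\mathrm{depth}_{\mathfrak{a}}\,\Gamma(Y_{\bbz},\bigwedge^{k}\!E)\geq 4$ by Lemma \ref{depth_Lemma}(ii).

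I would establish this depth bound first for $\mathcal{O}_{Y_{\bbz}}$: the Cohen-Macaulay assumption together with $\mathrm{codim}_{Y_{\bbz}}(Z_{\bbz})\geq 2$ gives $\mathrm{depth}_{\mathfrak{a}}\mathcal{O}_{Y_{\bbz}}\geq 2$, so $H^{0}_{Z_{\bbz}}(\mathcal{O}_{Y_{\bbz}})=H^{1}_{Z_{\bbz}}(\mathcal{O}_{Y_{\bbz}})=0$; feeding $\mathcal{F}=\mathcal{O}_{Y_{\bbz}}$ into the same long exact sequence and invoking the hypothesis $H^{1}(X_{\bbz},\mathcal{O}_{X_{\bbz}})=H^{2}(X_{\bbz},\mathcal{O}_{X_{\bbz}})=0$ forces $H^{2}_{Z_{\bbz}}(\mathcal{O}_{Y_{\bbz}})=H^{3}_{Z_{\bbz}}(\mathcal{O}_{Y_{\bbz}})=0$, hence $\mathrm{depth}_{\mathfrak{a}}\mathcal{O}_{Y_{\bbz}}\geq 4$ by Lemma \ref{depth_Lemma}(ii). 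To transfer this bound to $\bigwedge^{k}\!E$, I would use that this sheaf is locally free of finite rank, hence flat: the stable Koszul description of local cohomology yields $H^{i}_{\mathfrak{a}}(\bigwedge^{k}\!E)\cong H^{i}_{\mathfrak{a}}(\mathcal{O}_{Y_{\bbz}})\otimes_{\mathcal{O}_{Y_{\bbz}}}\bigwedge^{k}\!E$, so vanishing of the right-hand factor for $i\leq 3$ gives the required vanishing for $\bigwedge^{k}\!E$.

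The step I expect to be most delicate is the last transfer of the depth bound from $\mathcal{O}_{Y_{\bbz}}$ to $\bigwedge^{k}\!E$ in a global (rather than merely stalkwise) sense, since $Y_{\bbz}$ is singular and $\bigwedge^{k}\!E$ need not be globally free; however, flatness of locally free sheaves makes local cohomology compatible with tensoring, so the global bound follows cleanly. Putting the two steps together establishes the required vanishing $H^{1}(X,\mathcal{O}_X)=H^{2}(X,\mathcal{O}_X)=0$.
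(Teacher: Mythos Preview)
Your proposal is correct and follows essentially the same route as the paper: both arguments establish $\mathrm{depth}_{\mathfrak{a}}A_{\bbz}\geq 4$ from the hypotheses via Lemma~\ref{depth_Lemma}, then transfer this depth bound to the locally free sheaf $i_{*}\mathcal{O}_X$ on $Y_{\bbz}$ and conclude by the local cohomology long exact sequence. The only cosmetic difference is in the transfer step: the paper takes a regular sequence $x_1,\ldots,x_4\in\mathfrak{a}$ for $A_{\bbz}$ and checks directly, using local freeness of $i_{*}\mathcal{O}_X$ over the irreducible $Y_{\bbz}$, that it remains regular on $M=\Gamma(Y_{\bbz},i_{*}\mathcal{O}_X)$, whereas you decompose into the summands $\bigwedge^{k}E$ and invoke flatness to tensor the stable Koszul computation of $H^{i}_{\mathfrak{a}}$---these are equivalent justifications of the same fact.
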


\begin{proof}
Since $Y$ is affine, $H^{3}(Y_{\bbz},\mathcal{O}_{Y_{\bbz}})=0$. 	
 It follows from the assumption   $H^1(X_{\bbz},\mathcal{O}_{X_{\bbz}})=H^2(X_{\bbz},\mathcal{O}_{X_{\bbz}})=0$
and Lemma \ref{depth_Lemma} (\rmnum{1}) 
 that $H^{i}_{Z_{\bbz}}(Y_{\bbz},\mathcal{O}_{Y_{\bbz}})=0$ for $ 0 \leq r \leq 3$. Hence, $\mathrm{depth}_{\mathfrak{a}}(A_{\bbz})\geq 4$ by Lemma \ref{depth_Lemma} (\rmnum{2}). Thus there exist an $A_{\bbz}$-regular 
sequence $x_1,x_2, x_3, x_4 \in \mathfrak{a}$. Let $M=\Gamma (Y_{\bbz},i_{*}\mathcal{O}_{X})$. The singular supervariety $(Y_{\bbz},i_{*}\mathcal{O}_{X})$ is split by Proposition \ref{prop_split}. 
 Hence $M$ is an $A_{\bbz}$-module with $\tilde{M}=i_{*}\mathcal{O}_{X}$. We claim that image of $x_1,x_2, x_3, x_4 \in \mathfrak{a} $ under the inclusion 
$ A_{\bbz} \hookrightarrow M $ given by the splitness is a $M$-regular sequence. Indeed, suppose that 
$x_{i}m=0$ for some $m \in M/(x_1,\ldots,x_{i-1})M$. Since $\tilde{M}$ is 
locally free, for any $x \in Y_{\bbz}$, there exists an open subset  $V \subset Y_{\bbz}$ such that $x \in V$ and $\tilde{M}|_V$ is a  free $\mathcal{O}_{V}$-module.  Since $Y_{\bbz}$ is irreducible, $x_1,\ldots,x_{4}$ is a $\Gamma(\tilde{M},V)$-regular sequence. Thus $x_{i}m|_{V}=0$ implies  $m|_{V}=0$.
Hence $m=0$ and the claim follows. Lemma \ref{depth_Lemma} (\rmnum{2}) implies that $H^{i}_{Z_{\bbz}}(Y_{\bbz},\iota _*\mathcal{O}_{X})=0$ for $ 0 \leq r \leq 3$. The proposition follows from  Lemma \ref{depth_Lemma}. 
 \end{proof}

\subsection{ Quantizations of super nilpotent orbits}

Let $G$ be an algebraic supergroup and assume that $\mathfrak{g}=\Lie (G)$ is a basic Lie superalgebra.  Given  $T \in \mathrm{Al}_{\mathbf{k}}$, there is a natural adjoint action of the group $G(T)$ on $\mathfrak{g}(T)$. This defines the adjoint action of $G$ on $\mathfrak{g}$ as group superschemes. For an even element $e 
\in \mathfrak{g}_{\bar{0}}$, the adjoint orbit $\mathbb{O}$ of $e$ is the superscheme 
given by $$\mathbb{O}(T)=G(T)\cdot e(T)$$ for $T \in \mathrm{Al}_{\mathbf{k}}$. The even reduced scheme $ \mathbb{O}_{\bbz}$ of $\mathbb{O}$ is the ordinary  adjoint orbit $G_{\bbz}e$. 
Since $\mathfrak{g}$ is a basic Lie superalgebra,
there is an even, non-degenerate, $G$-invariant two form $(,)$ on  $\mathfrak{g}$. Thus we have an  isomorphism $\mathfrak{g} \rightarrow  \mathfrak{g}^{*}$ which sends $e$ to $\chi:=(e,\text{-})$.
Furthermore,  $\mathbb{O}=Ge$ is isomorphic to  the coadjoint orbit $G\chi$. According to \cite{Ko},  there is a  Kostant-Kirillov 
symplectic form $\omega_{\chi}$ on $\mathbb{O}$ given by 
$$ \omega_{\chi}(\bar{x},\bar{y})(a)=a([x,y])$$ 
for  $\bar{x},\bar{y} \in  (\mathrm{T}_{\mathbb{O}})_a=\mathfrak{g}/\mathfrak{g}_{a}$ and $a \in (G\chi)_{\bbz}$. Here $\bar{x},\bar{y}$ are the equivalence classes of $x,y \in \mathfrak{g}$, respectively. 
\begin{theorem}
Let $(\mathbb{O}, \omega_{\chi})$ be the adjoint orbit of a nilpotent $e \in \mathfrak{g}_{\bar{0}}$, equipped with the Kostant-Krillov form $\omega_{\chi}$. Assume  that the closure of the even reduced part $\mathbb{O}_{\bbz}$ is irreducible, Cohen-Maucalay and 
$H^1(\mathbb{O}_{\bbz},\mathcal{O}_{\mathbb{O}_{\bbz}})=H^2(\mathbb{O}_{\bbz},\mathcal{O}_{\mathbb{O}_{\bbz }})=0$. Then we have that $\mathbb{O}$ is split and  $H^1(\mathbb{O},\mathcal{O}_{\mathbb{O}})=H^2(\mathbb{O},\mathcal{O}_{\mathbb{O}})=0$;
 the set $Q(\mathbb{O},\omega_{\chi})$ of equivalence classes of deformation quantizations of $(\mathbb{O}, \omega_{\chi})$  is bijective with $ H_{\mathrm{dR}}^{2}(\mathbb{O})_{\hbar}= H_{\mathrm{dR}}^{2}(\mathbb{O}_{\bbz})_{\hbar}$.	
\end{theorem}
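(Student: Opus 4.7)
The plan is to first verify the geometric hypotheses of Propositions~\ref{prop_split} and~\ref{prop6.4} for $\mathbb{O}$, and then feed the resulting vanishing results into the classification machinery of Sections~3--5.

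I would take $Y_{\bbz} = \overline{\mathbb{O}_{\bbz}}$, the Zariski closure of $\mathbb{O}_{\bbz}$ inside $\mathfrak{g}_{\bar{0}}^{*}$. As a closed subset of the affine space $\mathfrak{g}_{\bar{0}}^{*}$, $Y_{\bbz}$ is affine; it is irreducible and Cohen--Macaulay by hypothesis. Standard nilpotent orbit theory for the reductive Lie algebra $\mathfrak{g}_{\bar{0}}$ identifies the smooth locus of $\overline{\mathbb{O}_{\bbz}}$ with $\mathbb{O}_{\bbz}$, and since every nilpotent $G_{\bbz}$-orbit has even dimension, the boundary $\overline{\mathbb{O}_{\bbz}} \setminus \mathbb{O}_{\bbz}$---a finite union of smaller nilpotent orbits---has codimension at least $2$. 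Proposition~\ref{prop_split} then yields that $\mathbb{O}$ is splitting, and Proposition~\ref{prop6.4} gives $H^{1}(\mathbb{O}, \mathcal{O}_{\mathbb{O}}) = H^{2}(\mathbb{O}, \mathcal{O}_{\mathbb{O}}) = 0$. Consequently the natural maps $H_{\mathrm{dR}}^{i}(\mathbb{O}) \to H^{i}(\mathbb{O}, \mathcal{O}_{\mathbb{O}}) = 0$ are trivially surjective for $i = 1, 2$, so $\mathbb{O}$ is admissible.

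Next I would show $Q(\mathbb{O}, \omega_{\chi})$ is non-empty by reusing the inductive lifting argument from the proof of Theorem~\ref{thm_peroid}. Admissibility forces $b_{2}$ in the long exact sequence \eqref{les} to be trivial at each step, so $\mathrm{Loc}(\mathcal{M}_{p}, b_{2}(c)) = 0$, and Proposition~\ref{thm existence} then produces a lift $\mathcal{M}_{p+1}$. Starting from the base case $\mathcal{M}_{0} = \mathcal{M}_{\omega_{\chi}}$ and passing to the inverse limit yields a quantization. Proposition~\ref{prop5.5} now gives $Q(\mathbb{O}, \omega_{\chi}) \simeq H_{F}^{2}(\mathbb{O})_{\hbar}$. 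Feeding the vanishings $H^{1}(\mathbb{O}, \mathcal{O}_{\mathbb{O}}) = H^{2}(\mathbb{O}, \mathcal{O}_{\mathbb{O}}) = 0$ into the long exact sequence \eqref{les3.4} forces $H_{F}^{2}(\mathbb{O}) = H_{\mathrm{dR}}^{2}(\mathbb{O})$, and Theorem~\ref{equi_dR_HOM} identifies $H_{\mathrm{dR}}^{2}(\mathbb{O})$ with $H_{\mathrm{dR}}^{2}(\mathbb{O}_{\bbz})$, completing the chain $Q(\mathbb{O}, \omega_{\chi}) \simeq H_{\mathrm{dR}}^{2}(\mathbb{O})_{\hbar} = H_{\mathrm{dR}}^{2}(\mathbb{O}_{\bbz})_{\hbar}$.

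The main obstacle is the geometric input, specifically the codimension-two condition on the boundary and the identification of the smooth locus with $\mathbb{O}_{\bbz}$. These are classical in the reductive setting but deserve a careful statement in the super context, since the $G$-orbit $\mathbb{O}$ in general carries more structure than the underlying $G_{\bbz}$-orbit. Once the geometric hypotheses are cleanly secured, the rest of the proof is pure bookkeeping: Propositions~\ref{prop_split}, \ref{prop6.4}, \ref{prop5.5}, Theorem~\ref{equi_dR_HOM}, and the exact sequence \eqref{les3.4} combine to give the three claims of the theorem in sequence, with non-emptiness of $Q(\mathbb{O}, \omega_{\chi})$ being automatic from the inductive obstruction-vanishing strategy already used to prove Theorem~\ref{thm_peroid}.
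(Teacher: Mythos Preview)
Your proposal is correct and follows essentially the same route as the paper: set $Y_{\bbz}=\overline{\mathbb{O}_{\bbz}}$, apply Propositions~\ref{prop_split} and~\ref{prop6.4} for splitting and the cohomology vanishing, then combine Proposition~\ref{prop5.5}, Theorem~\ref{thm_peroid}, the long exact sequence~\eqref{les3.4}, and Theorem~\ref{equi_dR_HOM} to obtain the bijection. The only cosmetic difference is that the paper packages the identification $H_{F}^{2}(\mathbb{O})=H_{\mathrm{dR}}^{2}(\mathbb{O})=H_{\mathrm{dR}}^{2}(\mathbb{O}_{\bbz})=H_{F}^{2}(\mathbb{O}_{\bbz})$ via Proposition~\ref{prop_admi}(\rmnum{2}) rather than invoking~\eqref{les3.4} directly, and you are more explicit than the paper about non-emptiness of $Q(\mathbb{O},\omega_{\chi})$ and about the codimension-two and smooth-locus verifications.
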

   
\begin{proof}
Let $X=\mathbb{O}$, $X_{\bbz}=\mathbb{O}_{\bbz}$ and  $Y_{\bbz}=\bar{\mathbb{O}}_{\bbz} \subset \mathfrak{g}_{\bar{0}}$ be the Zariski closure of $X_{\bbz}$. The first statement follows from Proposition \ref{prop_split} and \ref{prop6.4}. 
By Proposition \ref{prop_admi} (\rmnum{2}) we have 
$$ H_{F}^{2}(X_{\bbz})=H_{\mathrm{dR}}^{2}(X_{\bbz})=H_{\mathrm{dR}}^{2}(X)=H_{F}^{2}(X).$$
The last statement follows from Proposition  \ref{prop5.5} and Theorem \ref{thm_peroid}.
\end{proof}	

\begin{remark}
There are close relations between normality and Cohen-Macaulaness of the closures of nilpotent orbits \cite{KP1, KP2}. A list for the nilpotent orbits $\mathbb{O}_{\bbz}$ with $H^1(\mathbb{O}_{\bbz},\mathcal{O}_{\mathbb{O}_{\bbz}})=H^2(\mathbb{O}_{\bbz},\mathcal{O}_{\mathbb{O}_{\bbz}})=0$ is given in \cite{Lo1}. 
Some basic properties of super nilpotent orbits are given in \cite{Mu2}.
\end{remark}
As in the non-super case \cite{Lo2}, quantization of  
general nilpotent orbits can be studied by that of super Q-factorial terminalizations. This will appear somewhere else.

\subsection*{Data availability}
Data  sharing is not  applicable to this article as no new data were created or analyzed in
this study.

\section*{Declarations}

\subsection*{Conflict of interest}
The author declares no conflict of interest.

\end{document}